\newtheorem{theorem}{Theorem}[section]
\newtheorem{lemma}[theorem]{Lemma}
\newtheorem{note}[theorem]{Note}
\newtheorem{question}[theorem]{Question}
\newtheorem{prop}[theorem]{Proposition}
\newtheorem{cor}[theorem]{Corollary}
\newtheorem{exa}[theorem]{Example}
\newtheorem*{Theorem1'}{Theorem 1'}
\theoremstyle{definition}
\theoremstyle{remark}
\numberwithin{equation}{section}
\newcommand \g{{\mathfrak g}}
\newcommand  \s{{\mathfrak s}}
\newcommand \gl{{\mathfrak {gl}}}
\renewcommand \sl{{\mathfrak {sl}}}
\newcommand \GL{{\mathrm {GL}}}
\newcommand \F{{F_\ell}}
\newcommand \C{{\Phi}}
\newcommand \EF{{\mathbb F}}
\begin{document}

\title{\small Generalized Artin-Schreier polynomials}

\author{ N. H. Guersenzvaig}
\address{A\!v. Corrientes 3985 6A, (1194) Buenos Aires, Argentina}
\email{nguersenz@fibertel.com.ar}
%\thanks{The first author was supported in part by CONICET and SECYT-UNC grants.}

\author{Fernando Szechtman}
\address{Department of Mathematics and Statistics, University of Regina, Canada}
\email{fernando.szechtman@gmail.com}
\thanks{The second author was supported in part by an NSERC discovery grant}

%    General info
\subjclass[2000]{Primary 12F10; Secondary 12F20, 17B50, 15A21}

%\date{January 1, 2001 and, in revised form, June 22, 2001.}

%\dedicatory{This paper is dedicated to our advisors.}

\keywords{Artin-Schreier polynomial; Lie algebra; Clebsch-Gordan
formula; primitive element; Galois group; Dickson invariants}
\begin{abstract} Let $F$ be a field of prime characteristic $p$
containing $\EF_{p^n}$ as a subfield. We refer to
$q(X)=X^{p^n}-X-a\in F[X]$ as a generalized Artin-Schreier
polynomial. Suppose that $q(X)$ is irreducible and let $C_{q(X)}$
be the companion matrix of $q(X)$. Then $ad\, C_{q(X)}$ has such
highly unusual properties that any $A\in\gl(m)$ such that $ad\, A$
has like properties is shown to be similar to the companion matrix
of an irreducible generalized Artin-Schreier polynomial.

We discuss close connections with the decomposition problem of the tensor product
of indecomposable modules for a 1-dimensional Lie algebra over a field of characteristic $p$,
the problem of finding an explicit primitive element for every intermediate field of the Galois
extension associated to an irreducible generalized Artin-Schreier polynomial, and the
problem of finding necessary and sufficient conditions for the irreducibility of a family of polynomials.
\end{abstract}

\maketitle

\section{Introduction}

If $\g$ is a finite dimensional semisimple Lie algebra over an
algebraically closed field of characteristic 0 then the finite
dimensional indecomposable (i.e., irreducible) $\g$-modules are
well understood (see \cite{H}). However, the problem of
classifying the finite dimensional indecomposable modules of an
arbitrary finite dimensional Lie algebra over any given field is
in general unattainable. This is explained in \cite{GP} when $\g$
is abelian with $\dim(\g)>1$ and, more generally, in \cite{M} for
virtually any Lie algebra over an algebraically closed field of
characteristic 0 that is not semisimple or 1-dimensional.
Nevertheless, significant progress can be made by restricting
attention to certain types of indecomposable modules. In a sense,
the simplest type of indecomposable module other than irreducible
is a uniserial module, i.e., one admitting a unique composition
series. A systematic study of uniserial modules for perfect Lie
algebras of the form $\s\ltimes V$, where $\s$ is complex
semisimple and $V$ is a non-trivial irreducible $\s$-module, is
carried out in \cite{CS}, which culminates with a complete
classification of uniserial modules for $\sl(2)\ltimes V(m)$, with
$m>0$.

The problem of classifying all uniserial modules of an abelian Lie
algebra over an arbitrary field $F$ is studied in \cite{CS2}. The
classification is achieved under certain restrictions on $F$, as
this subtle problem is intimately related to a delicate and
sharpened version of the classical theorem of the primitive
element. Such versions have been studied by Nagell \cite{N1} and
\cite{N2}, Kaplansky \cite{K}, Isaacs \cite{Is} and many other
authors (see \cite{CS2} and references therein). Suppose $F$
contains a copy of $\EF_{p^n}$ as subfield, where $p$ is a prime
and $n\geq 1$. Refer to $q(X)=X^{p^n}-X-a\in F[X]$ as a
generalized Artin-Schreier polynomial. In \cite{CS2}, Example 2.3,
irreducible generalized Artin-Schreier polynomials are used to
exhibit the limitations of some of the modified versions of the
theorem of the primitive element.

In \cite{CS3} attention is focused on the classification of
uniserial and far more general indecomposable modules over a
family of 2-step solvable Lie algebras $\g$. A complete
classification is achieved when the underlying field $F$ has
characteristic 0. The properties of the operator
$ad\,A:\gl(m)\to\gl(m)$, given by $B\mapsto [A,B]$, where
$A\in\gl(m)$ is the companion matrix of a polynomial $q\in F[X]$,
play a significant role in this classification. %Let $F$ be an
%arbitrary field  and let $M_m(F)$ the associative algebra of all
%$m\times m$ matrices over $F$. This becomes a Lie algebra, denoted
%by $\gl(m,F)$ or simply $\gl(m)$, under the usual bracket
%$[A,B]=AB-BA$. Each $A\in\gl(m)$ gives rise to the linear map
%$ad\, A:\gl(m)\to\gl(m)$, given by $B\mapsto [A,B]$.
If $F$ has prime characteristic $p$ and $q\in F[X]$ is an
irreducible generalized Artin-Schreier polynomial with companion
matrix $C_q$, then $ad\; C_q$ has exceptional properties which are
used in \cite{CS3}, Note 3.18, to furnish examples of uniserial
$\g$-modules that fall outside of the above classification.
%and in \cite{CS5}, \S 4,
%to produce a correct classification of uniserial $\g$-modules when $\chr(F)=p$ but $F$ is not
%algebraically closed.

Some of the easily verified properties enjoyed by the companion matrix $C_q\in\gl(p)$ of an irreducible classical Artin-Schreier polynomial, are as follows:

$\bullet$ All eigenvalues of $ad\; C_q$ are in $F$;

$\bullet$ The eigenvalues of $ad\; C_q$ form a subfield of $F$;

$\bullet$ The centralizer of $C_q$ is a subfield of $M_p(F)$;

$\bullet$ All eigenvectors of $ad\; C_q$ are invertible in
$M_p(F)$;

$\bullet$ All eigenspaces of $ad\; C_q$ have the same dimension;

$\bullet$ $ad\; C_q$ is diagonalizable with minimal polynomial
$X^p-X$.

In this paper we find all matrices $A\in\gl(m)$ such that $ad\,
A:\gl(m)\to\gl(m)$ has like properties.

\begin{theorem}\label{uno} Let $F$ be a field and let $A\in \gl(m,F)$. Then

(C1) All eigenvalues of $ad\, A$ are in $F$,

(C2) The eigenvalues of $ad\, A$ form a subfield of $F$,

(C3) The centralizer of $A$ is a subfield of $M_m(F)$,

\noindent if and only if

(C4) $F$ has prime characteristic $p$,

(C5) $A$ is similar to the companion matrix of a monic irreducible
polynomial $h\in F[X]$ of degree $m$,

(C6) If $q\in F[X]$ is the separable part of $h$, i.e.,
$h(X)=q(X^{p^e})$, $e\geq 0$, and $q$ is separable, then
$q=X^{p^n}-X-a$, where $a\in F$, $n\geq 1$, and $\EF_{p^n}$ is a
subfield of $F$.

Moreover, if (C4)-(C6) hold then: $q$ is irreducible; the subfield
of $F$ formed by the eigenvalues of $ad\, A$ is precisely
$\EF_{p^n}$; all eigenvectors of $ad\, A$ are invertible in
$M_m(F)$; all eigenspaces of $ad\, A$ have dimension $p^{n+e}$;
$ad\, A$ is diagonalizable if and only if $h$ itself is separable;
the invariant factors of $ad\, A$ are
$$
X^{p^{n+e}}-X^{p^e},\dots,X^{p^{n+e}}-X^{p^e},\text{ with
multiplicity }p^{n+e},
$$
so, in particular, the minimal polynomial of $ad\, A$ is
$X^{p^{n+e}}-X^{p^e}$.
\end{theorem}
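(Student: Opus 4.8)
The plan is to read (C3) as a statement about the kernel of $\ad A$, namely its centralizer $C(A)=\ker(\ad A)$, and to extract the matrix-theoretic backbone of the theorem from it first. A centralizer that is a field is in particular a commutative division ring, and a square matrix has commutative centralizer precisely when it is non-derogatory; in that case $C(A)=F[A]\cong F[X]/(m_A)$, where $m_A$ is the minimal polynomial. For this quotient to be a field $m_A$ must be irreducible, and being non-derogatory forces $m_A$ to coincide with the characteristic polynomial $h$. Thus (C3) already yields (C5): $A$ is similar to the companion matrix of an irreducible $h$ of degree $m$, and moreover $K:=F[A]$ is a field of degree $m$ over $F$, so that $V:=F^m$ is a one-dimensional $K$-space, i.e.\ an irreducible $F[A]$-module. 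To obtain (C4) I would invoke the standard description of the spectrum of $\ad A$ as the set of differences $\lambda_i-\lambda_j$ of eigenvalues of $A$; this set is finite, and by (C2) it is a subfield $S$ of $F$. A finite field has prime characteristic, giving (C4), and since $1\in S$ forces $|S|\ge 2$ we may write $S=\EF_{p^n}$ with $n\ge 1$; by (C1), $S\subseteq F$.

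The heart of the forward direction is deducing (C6). Writing $h(X)=q(X^{p^e})$ with $q$ separable (the usual separable-part decomposition in characteristic $p$), irreducibility of $h$ gives irreducibility of $q$. The key observation is that differences of roots of $q$ are Frobenius powers of differences of roots of $h$: if $\mu_i=\lambda_i^{p^e}$ then $\mu_i-\mu_j=(\lambda_i-\lambda_j)^{p^e}\in S$, because $S$ is a field stable under $x\mapsto x^{p^e}$. Hence every root of $q$ differs from a fixed root $\mu$ by an element of $S\subseteq F$, so $L:=F(\mu)$ is the splitting field of $q$ and $L/F$ is Galois of degree $\deg q$. I would then define $f\colon\mathrm{Gal}(L/F)\to(S,+)$ by $f(\sigma)=\sigma(\mu)-\mu$; since elements of $S$ are fixed by the Galois group, $f$ is an injective homomorphism, and its image $I$ satisfies $R_q=\mu+I$ for the set $R_q$ of roots of $q$. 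Consequently $S=R_q-R_q=I-I=I$, so $f$ is an isomorphism onto $\EF_{p^n}$, forcing $\deg q=p^n$ and $R_q=\mu+\EF_{p^n}$. Expanding $q=\prod_{s\in\EF_{p^n}}(X-\mu-s)=(X-\mu)^{p^n}-(X-\mu)$ via the identity $\prod_{s\in\EF_{p^n}}(Y-s)=Y^{p^n}-Y$ together with Frobenius yields $q=X^{p^n}-X-a$ with $a=\mu^{p^n}-\mu\in F$, which is (C6). This Galois step is where I expect the main obstacle to lie, precisely because one must rule out $R_q$ being a proper non-coset subset whose difference set still equals $S$; the device $R_q=\mu+I$ with $I$ a subgroup is what closes this gap.

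For the converse (C4)--(C6) $\Rightarrow$ (C1)--(C3), irreducibility of $h$ gives (C3) as above, while reversing the root computation shows the differences $\lambda_i-\lambda_j=(c-c')^{1/p^e}$ range exactly over $\EF_{p^n}$, establishing (C1), (C2) and simultaneously the claim that the eigenvalue field is precisely $\EF_{p^n}$. For the remaining assertions I would pass to the restricted structure: since $\ad$ is a Lie homomorphism and we are in characteristic $p$, the commuting of left and right multiplication gives $(\ad A)^{p^k}=\ad(A^{p^k})$, and from $\theta^{p^{n+e}}=\theta^{p^e}+a$ one gets $A^{p^{n+e}}=A^{p^e}+aI$, whence $(\ad A)^{p^{n+e}}=(\ad A)^{p^e}$. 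Thus $\ad A$ is annihilated by $X^{p^{n+e}}-X^{p^e}=(X^{p^n}-X)^{p^e}=\prod_{c\in\EF_{p^n}}(X-c)^{p^e}$. To see this is exactly the minimal polynomial, and to pin down the invariant factors and eigenspace dimensions, I would base change to $\overline F$, decompose $\End(V)=\bigoplus_{i,j}\Hom(V_j,V_i)$ into single-block pieces of $A$, and identify $\ad A$ on each piece as $(\lambda_i-\lambda_j)\,\mathrm{Id}+(N_i\otimes 1-1\otimes N_j)$. The nilpotent part is the tensor product of two nilpotent Jordan blocks of size $p^e$; the change of variables $u=s+t$ shows $F[s,t]/(s^{p^e},t^{p^e})$ is free of rank $p^e$ over $F[u]/(u^{p^e})$, so this tensor product is $(J_{p^e})^{\oplus p^e}$. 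Counting the $p^n$ pairs $(i,j)$ with $\lambda_i-\lambda_j=c$ then gives, for each $c\in\EF_{p^n}$, exactly $p^{n+e}$ Jordan blocks of size $p^e$; this yields the eigenspace dimension $p^{n+e}$, the $p^{n+e}$ equal invariant factors $X^{p^{n+e}}-X^{p^e}$, the stated minimal polynomial, and diagonalizability exactly when $e=0$, i.e.\ when $h$ is separable. Finally, invertibility of eigenvectors is immediate from irreducibility of $V$: if $AB=B(A+cI)$ with $B\ne 0$, then $\mathrm{im}\,B$ is $A$-invariant, since multiplication by $\theta+c\ne 0$ is bijective on $V$, hence $\mathrm{im}\,B=V$ and $B$ is invertible.
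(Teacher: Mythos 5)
Your proposal is correct, and although its overall skeleton agrees with the paper's (the centralizer condition forces $A$ cyclic with irreducible minimal polynomial, as in Lemma \ref{l1}; the spectrum of $\ad A$ is the difference set of the spectrum of $A$, as in Lemma \ref{l2}; a Galois step pins down $q=X^{p^n}-X-a$; the invariant factors come from the Jordan form over a splitting field and the decomposition $\gl(V)=\bigoplus_{i,j}\Hom(V_j,V_i)$), you replace the paper's key technical ingredients by genuinely different arguments. Where the paper proves the general tensor-decomposition formula of Proposition \ref{fin} ($V\otimes W$ with $\dim V=n\le\dim W=p^e$ splits into $n$ indecomposables of dimension $p^e$, shown by an inductive elementary-divisor computation on the generators $v_i\otimes w_m$) and then applies Corollary \ref{co1}, you prove only the equal-size case $J_{p^e}(0)\otimes J_{p^e}(0)\cong J_{p^e}(0)^{\oplus p^e}$, via the freeness of $F[s,t]/(s^{p^e},t^{p^e})$ over $F[u]/(u^{p^e})$ with $u=s+t$ (valid since $(u-t)^{p^e}=u^{p^e}-t^{p^e}$, so $1,t,\dots,t^{p^e-1}$ is a basis); this is slicker and fully sufficient for the theorem, though it yields less than the paper's closed formula, which answers Question \ref{q3} in the wider range $n\le p^e$. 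Similarly, you obtain characteristic $p$ directly from the finiteness of the subfield of eigenvalues, where the paper runs the automorphism argument of Lemma \ref{l3}; you obtain invertibility of eigenvectors from the $A$-invariance of $\mathrm{im}\,B$ in the irreducible $F[A]$-module $F^m$, and the eigenspace dimension $p^{n+e}$ from your block count, where the paper derives both from Lemma \ref{l3} ($A\sim A+bI$, then $XS\in F[A]$); and your identity $(\ad A)^{p^{n+e}}=(\ad A)^{p^e}$, coming from $(\ad A)^{p^k}=\ad(A^{p^k})$ and $A^{p^{n+e}}=A^{p^e}+aI$, produces the annihilating polynomial with no Jordan theory at all (the paper has no analogue of this step, though, like you, one still needs the block count to see the bound is attained). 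Finally, your Galois step is actually more careful than the paper's at one point: the paper asserts, citing Lemma \ref{l2}, that the distinct roots of $\mu_A$ are $\beta+b$ for \emph{all} $b\in\EF_{p^n}$, but a finite set whose difference set equals all of $\EF_{p^n}$ need not be a full coset; your observation that $\sigma\mapsto\sigma(\mu)-\mu$ is an injective homomorphism with image a subgroup $I$, so that $R_q=\mu+I$ and hence $S=R_q-R_q=I-I=I$, supplies exactly the transitivity argument that makes this assertion rigorous.
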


The most challenging part of the proof of Theorem \ref{uno} is to
find the invariant factors of $ad\, A$, as this is depends on the solution
to the following problem.

Let $L=\langle x\rangle$ be a 1-dimensional Lie algebra over a
field $F$ and let $V$ and $W$ be indecomposable $L$-modules of
respective dimensions $n$ and $m$ upon which $x$ acts with at
least one eigenvalue from $F$.

\noindent{\sc Question.} How does the $L$-module $V\otimes W$ decompose as the direct sum of indecomposable $L$-modules?

When $F$ has characteristic 0 we may derive an answer from the
Clebsch-Gordan formula by imbedding $L$ into $\sl(2)$. A direct
computation in the complex case already appeared in \cite{Ro} in
1934. The results in characteristic 0 fail, in general, in prime
characteristic $p$, which is the case we require. The analogue
problem for a cyclic $p$-group when $F$ has prime characteristic
$p$ was solved by B. Srinivasan~\cite{S}. Her solution is of an
algorithmic nature. Since then several algorithms have appeared.
We mention \cite{Ra}, \cite{Re} and, most recently, \cite{I},
although the literature is quite vast on this subject. For
information on the decomposition of the exterior and symmetric
squares of an indecomposable module of a cyclic $p$-group in prime
characteristic $p$ see \cite{GL}. What we need to be able to
compute the invariant factors of $ad\, A$ in Theorem \ref{uno} is
a closed formula for the decomposition of the $L$-module $V\otimes
W$ when $n\leq m=p^e$, $e\geq 0$. This is achieved in \S\ref{ma4}.

The proof of Theorem \ref{uno} is given in \S\ref{ma3}, making use
of preliminary results from \S\ref{sec:pre} and \S\ref{ma4}.

In \S\ref{alfaH} we give an application to Galois theory of the
polynomials appearing in Theorem \ref{uno}. Let $F$ be a field of
prime characteristic $p$ containing $\EF_{p^n}$ as a subfield and
suppose that $q(X)=X^{p^n}-X-a\in F[X]$ is irreducible. Let $K/F$
be the corresponding Galois extension. Here $K=F[\alpha]$, where
$\alpha\in K$ is a root of $q(X)$. Given an arbitrary intermediate
field $E$ of $K/F$ we find a primitive element $\alpha_E$ such
that $E=F[\alpha_E]$. We actually give a recursive formula to
write $\alpha_E$ as a polynomial in $\alpha$ with coefficients in
$\EF_{p^n}$. This is achieved by means of the so-called Dickson
invariants, discovered by L. E. Dickson \cite{D} in 1911.

Finally, in \S\ref{hector} we discuss the actual existence of
irreducible polynomials $q(X)=X^{p^n}-X-a\in F[X]$, with $F$ as in
the previous paragraph. As explained in \S\ref{alfaH}, if $n>1$
and $q(X)$ is irreducible then $a$ must be transcendental over
$\EF_p$. This fact, together with the more general polynomials
$q(X^{p^e})$ considered in Theorem \ref{uno}, lead us to study the
irreducibility of polynomials of the form
$$h(X)=X^{p^{n+e}}-X^{p^e}-g(Z^r)\in F[X],$$
where $X$ and $Z$ are algebraically independent elements over an
arbitrary field $K$ of prime characteristic $p$, $n>0$, $r>0$,
$e\geq 0$, $F=K(Z)$, and $g(Z)\in K[Z]$ is a non-zero polynomial
of degree relatively prime to $p$. Using results from \cite{MS}
and \cite{G}, we obtain in \S\ref{hector} necessary and sufficient
conditions for the irreducibility of $h(X)$. In particular,
$X^{p^n}-X-g(Z^r)\in F[X]$ is irreducible for any $n>0$, $r>0$ and
non-zero $g\in K[Z]$ whose degree relatively prime to $p$. This
limitation on $\deg(g)$ is needed, as the example
$X^{p^n}-X-(Z^{p^n}-Z)$ shows.

\section{Eigenvalues}
\label{sec:pre}

Let $F$ be a field. For $A\in M_m(F)$ let $\chi_A$ and $\mu_A$
denote the characteristic and minimal polynomials of $A$. If $b\in
F$ is an eigenvalue of $A$ we write $E_b(A)$ for the corresponding
eigenspace. If $B\in M_m(F)$ we write $A\sim B$ whenever $A$ and
$B$ are similar. The companion matrix to a monic polynomial $g\in
F[X]$ of degree $m$ will be denoted by $C_g$.

\begin{lemma}\label{l1} Let $A\in M_m(F) $ and let $C$ be the centralizer of
$A$ in $M_m(F)$. Then $C$ is a subfield of $M_m(F)$ if and only if
$A$ is similar to the companion matrix of a monic irreducible
polynomial in $F[X]$ of degree $m$.
\end{lemma}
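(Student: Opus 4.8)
The plan is to study the centralizer through the module structure it induces. I would regard $V=F^m$ as a module over the polynomial ring $F[X]$ in which $X$ acts as $A$; then the centralizer $C$ of $A$ is exactly $\End_{F[X]}(V)$, the ring of $F[X]$-module endomorphisms of $V$, and this identification is what I mean to exploit throughout. I would also record at the outset the routine fact that similar matrices have conjugate centralizers and that conjugation by an invertible matrix is a ring automorphism of $M_m(F)$; hence the property of being a subfield is invariant under replacing $A$ by a similar matrix, so in each direction I may freely pass to a convenient representative.

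For the easy implication, suppose $A$ is similar to $C_g$ with $g\in F[X]$ monic irreducible of degree $m$. Replacing $A$ by $C_g$, the module $V$ becomes cyclic, $V\cong F[X]/(g)$. Any $F[X]$-endomorphism of the cyclic module $F[X]/(g)$ is multiplication by an element of $F[X]/(g)$, so $C\cong F[X]/(g)$ as rings. Since $g$ is irreducible this quotient is a field, and as $C$ contains the identity matrix it is a subfield of $M_m(F)$.

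For the converse, I would assume $C$ is a subfield of $M_m(F)$, so in particular $C$ has no idempotents other than $0$ and $I$. Decomposing $V$ into invariant factors gives $V\cong F[X]/(f_1)\oplus\cdots\oplus F[X]/(f_s)$ with the $f_i$ nonconstant. If $s\geq 2$, the projection onto the first summand is an $F[X]$-endomorphism of $V$, hence an element of $C$, and it is a nontrivial idempotent, contradicting that $C$ is a field. Therefore $s=1$, so $V$ is cyclic, $A$ is nonderogatory, and $\chi_A=\mu_A=f_1$. As in the easy direction this gives $C\cong F[X]/(\chi_A)$, and since $C$ is a field $\chi_A$ must be irreducible. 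Finally, a nonderogatory matrix is similar to the companion matrix of its characteristic polynomial, so $A\sim C_{\chi_A}$ with $\chi_A$ monic irreducible of degree $m$.

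The only genuine obstacle lies in the forward direction, and it is concentrated entirely in ruling out a decomposition of $V$ into more than one cyclic summand. The idempotent argument dispatches this cleanly: a field admits no nontrivial idempotents, whereas any honest direct-sum decomposition of $V$ as an $F[X]$-module produces one. Once $V$ is known to be cyclic, identifying $C$ with $F[X]/(\chi_A)$ and reading off the irreducibility of $\chi_A$ from the field condition is routine.
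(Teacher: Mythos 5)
Your proof is correct, and your converse takes a genuinely different route from the paper's. The paper extracts irreducibility first: since the centralizer $C$ is a field, its commutative subring $F[A]\cong F[X]/(\mu_A)$ is an integral domain, so $\mu_A$ is irreducible and $K=F[A]$ is a field; then $V=F^m$ becomes a $K$-vector space with $C=\mathrm{End}_K(V)$, and since $\mathrm{End}_K(V)$ is not a field when $\dim_K(V)>1$, one concludes $\dim_K(V)=1$, giving $\deg(\mu_A)=[K:F]=m$ and hence $A\sim C_{\mu_A}$. You reverse the order: you first force cyclicity of $V$ via the invariant-factor decomposition and the absence of nontrivial idempotents in a field, and only afterwards read off irreducibility from $C\cong F[X]/(\chi_A)$. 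The obstruction is really the same in both arguments --- nontrivial idempotents --- but you apply it to a direct-sum decomposition of $V$ as an $F[X]$-module, while the paper applies it implicitly inside $\mathrm{End}_K(V)$ (a matrix ring over $K$ of size greater than $1$ fails to be a field precisely because it has such idempotents, or zero divisors). Your version is more self-contained, since you re-derive the fact that the centralizer of a nonderogatory matrix is $F[X]/(\mu_A)$ as the endomorphism ring of a cyclic module rather than citing it, at the cost of invoking the structure theorem over the PID $F[X]$; the paper's version is shorter, trading the structure theorem for the observation that $V$ is a vector space over the field $F[A]$, which makes the dimension count immediate.
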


\begin{proof} If $A$ is similar to the companion matrix of a monic
polynomial of degree $m$ -necessarily $\mu_A$- it is well-known
\cite{J}, \S 3.11, that $C=F[A]$. If, in addition, $\mu_A$ is
irreducible, then $C=F[A]\cong F[X]/(\mu_A)$ is a field.

Assume that $C$ is a field. Then $K=F[A]$ is a field, so $\mu_A$
is irreducible and the column space $V=F^m$ is a vector space over
$K$. As such, $C=End_K(V)$. If~$\dim_K(V)>1$ then $End_K(V)$ is
not a field. Thus $\dim_K(V)=1$, so $\dim_F(V)=[K:F]=\deg(\mu_A)$,
whence $A$ is similar to the companion matrix of $\mu_A$.
\end{proof}

\begin{lemma}\label{l2} Let $A\in\gl(m)$ and let $K$ be a splitting field of $\mu_A$ over $F$. Then
$\mu_{ad\, A}$ splits over $K$. Moreover, if $S_A$ and $S_{ad\,
A}$ denote the sets of eigenvalues of $A$ and $ad\, A$ in $K$,
respectively, then $S_{ad\, A}=\{\alpha-\beta\,|\, \alpha,\beta\in
S_A\}$.
\end{lemma}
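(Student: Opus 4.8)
The plan is to push everything up to the splitting field $K$ and to triangularize $A$ there. First I would invoke the standard fact that the minimal polynomial of a linear operator is unchanged under extension of the base field; thus $\mu_{ad\, A}$, computed for the $F$-linear map $ad\, A$ on $\gl(m,F)$, coincides with the minimal polynomial of the $K$-linear extension of $ad\, A$ acting on $M_m(K)$, and likewise the eigenvalues lying in $K$ agree in both settings. Since $\mu_A$ splits over $K$, so does $\chi_A$ (they have the same roots), and hence $A$ is triangularizable over $K$. Replacing $A$ by a similar matrix, which alters neither $\mu_{ad\,A}$ nor $S_{ad\,A}$, I may assume $A\in M_m(K)$ is upper triangular with diagonal entries $\alpha_1,\dots,\alpha_m$, so that $S_A=\{\alpha_1,\dots,\alpha_m\}$ as a set.

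The core of the argument is to write $ad\, A=L_A-R_A$, where $L_A$ and $R_A$ denote left and right multiplication by $A$ on $M_m(K)$. These two operators commute, and each is triangularizable over $K$ because $A$ is; being commuting and triangularizable, they are simultaneously triangularizable, so $ad\,A=L_A-R_A$ is triangular with respect to a common basis. I would make this explicit with the basis of elementary matrices $E_{ij}$: a direct computation with $A$ upper triangular gives $[A,E_{ij}]=(\alpha_i-\alpha_j)E_{ij}+\sum_{k<i}A_{ki}E_{kj}-\sum_{l>j}A_{jl}E_{il}$, so ordering the pairs $(i,j)$ lexicographically by $(i,-j)$ renders the matrix of $ad\,A$ triangular with diagonal entries $\alpha_i-\alpha_j$. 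Consequently $\chi_{ad\,A}=\prod_{i,j}\bigl(X-(\alpha_i-\alpha_j)\bigr)$ splits over $K$, and since $\mu_{ad\,A}\mid\chi_{ad\,A}$, the minimal polynomial $\mu_{ad\,A}$ splits over $K$ as well. The eigenvalues of $ad\,A$ in $K$ are then precisely the distinct values among the $\alpha_i-\alpha_j$, and since forming the difference set depends only on the underlying set $\{\alpha_1,\dots,\alpha_m\}=S_A$, this yields $S_{ad\,A}=\{\alpha-\beta\mid\alpha,\beta\in S_A\}$.

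I expect the main obstacle to be the bookkeeping needed to justify the triangular form and to read off the diagonal correctly: one must fix an ordering of the $m^2$ pairs $(i,j)$ for which every off-diagonal contribution in $[A,E_{ij}]$ involves a basis vector strictly earlier than $E_{ij}$, and then check that the resulting diagonal multiset $\{\alpha_i-\alpha_j\}$ has the same set of distinct values as the claimed difference set. The simultaneous-triangularization viewpoint via $L_A-R_A$ is the conceptually cleanest route and sidesteps most of this bookkeeping, but the explicit $E_{ij}$ computation has the advantage of exhibiting the diagonal entries directly, which should also prove useful when the invariant factors of $ad\,A$ are computed later. A minor point to check is the field-extension invariance of $\mu_{ad\,A}$ and of the $K$-rational eigenvalues, which legitimizes the reduction to the triangular case.
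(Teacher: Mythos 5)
Your proof is correct, and it takes a genuinely different route from the paper's. The paper passes to $K$ and invokes the Jordan--Chevalley decomposition: $A=D+N$ over $K$ with $D$ diagonalizable, $N$ nilpotent, $[D,N]=0$, so that $ad\, A=ad\, D+ad\, N$ is again such a decomposition and hence $\chi_{ad\, A}=\chi_{ad\, D}$; it then cites the textbook fact (Humphreys, \S 4.2) that for diagonalizable $D$ the eigenvalues of $ad\, D$ are the pairwise differences of the eigenvalues of $D$. You instead triangularize $A$ over $K$ --- legitimate, since $\mu_A$ splitting forces $\chi_A$ to split, similarity alters neither $\mu_{ad\, A}$ nor $S_{ad\, A}$, and both the minimal and characteristic polynomials are stable under base change --- and your explicit computation $[A,E_{ij}]=(\alpha_i-\alpha_j)E_{ij}+\sum_{k<i}A_{ki}E_{kj}-\sum_{l>j}A_{jl}E_{il}$ together with the $(i,-j)$-lexicographic ordering does make every correction term strictly earlier than $E_{ij}$, so the matrix of $ad\, A$ is triangular with diagonal multiset $\{\alpha_i-\alpha_j\}$; since the eigenvalues in $K$ are exactly the roots of $\chi_{ad\, A}$ there and $\mu_{ad\, A}\mid\chi_{ad\, A}$, the lemma follows. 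Comparing the two: the paper's argument is shorter, outsourcing both nontrivial steps to a reference, and the reduction to the semisimple part is the cleanest way to see that the nilpotent part cannot affect $\chi_{ad\, A}$; your argument is self-contained and more elementary (no Jordan decomposition needed, though its use is unproblematic here because $\mu_A$ splits over $K$), and it yields slightly more than the statement, namely $\chi_{ad\, A}=\prod_{i,j}\bigl(X-(\alpha_i-\alpha_j)\bigr)$ with multiplicities. One caveat on your closing remark: the diagonal entries of a triangularization do not determine elementary divisors, and indeed the paper computes the invariant factors of $ad\, A$ not from this lemma but from the tensor-product decomposition of \S\ref{ma4} (Proposition \ref{fin} and Corollary \ref{co1}), so your explicit diagonal is less useful downstream than you anticipate.
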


\begin{proof} According to \cite{H}, \S 4.2, we have $A=D+N$, where $D,N\in \gl(m,K)$, $D$ is
diagonalizable, $N$ is nilpotent, and $[D,N]=0$. In particular,
$\chi_D=\chi_A$. Moreover, $ad\, A= ad\; D+ad\; N$, where $ad\; D$
is diagonalizable, $ad\; N$ is nilpotent, $[ad\; D,ad\; N]=0$. As
above, $\chi_{ad\; D}=\chi_{ad\, A}$. It thus suffices to prove
the statement for $D$ instead $A$, a well-known result also found
in \cite{H}, \S 4.2.
\end{proof}

\begin{lemma}\label{gam} Let $b\in F$ and let $f\in F[X]$ be a monic polynomial of
degree $m\geq 1$. Then $C_{f(X)}\sim C_{f(X+b)}+bI$.
\end{lemma}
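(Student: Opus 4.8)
The plan is to realize $C_{f(X)}$ as a nonderogatory (cyclic) matrix and to transport this property under the scalar shift by $bI$. Write $A=C_{f(X)}$, so that $\chi_A=\mu_A=f$ and $A$ is nonderogatory, meaning that $F[A]$ has $F$-dimension $m$. The first step is the elementary determinant identity
\[
\chi_{A-bI}(X)=\det\big((X+b)I-A\big)=\chi_A(X+b)=f(X+b),
\]
which identifies the characteristic polynomial of the shifted matrix $A-bI$ as $f(X+b)$, again a monic polynomial of degree $m$.

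Next I would argue that $A-bI$ is itself nonderogatory. Since $A-bI$ is a degree-one polynomial in $A$ and, conversely, $A=(A-bI)+bI$ is a polynomial in $A-bI$, the two matrices generate the same commutative subalgebra, $F[A-bI]=F[A]$. Hence $\dim_F F[A-bI]=m$, so $\deg\mu_{A-bI}=m$ and therefore $\mu_{A-bI}=\chi_{A-bI}=f(X+b)$. A matrix whose minimal and characteristic polynomials coincide is similar to the companion matrix of that common polynomial, so $A-bI\sim C_{f(X+b)}$.

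Finally, if $P$ realizes this similarity, $P(A-bI)P^{-1}=C_{f(X+b)}$, then conjugating $A$ by the same $P$ and noting that $P(bI)P^{-1}=bI$ yields $PAP^{-1}=C_{f(X+b)}+bI$, which is precisely the assertion $C_{f(X)}\sim C_{f(X+b)}+bI$.

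The only genuinely delicate point is the claim that the scalar shift preserves the nonderogatory property; everything else is the one-line determinant identity above or the standard fact about companion matrices. I would therefore spend the most care on the equality $F[A-bI]=F[A]$ and the resulting identification of $\mu_{A-bI}$, since this is what guarantees that $A-bI$ is similar to a single companion matrix rather than to a direct sum of smaller companion blocks.
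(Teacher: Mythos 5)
Your proof is correct and rests on the same two facts as the paper's own argument: the minimal polynomial transforms by the substitution $X\mapsto X+b$ under a scalar shift, and a matrix whose minimal polynomial has degree equal to its size is similar to the corresponding companion matrix. You merely run the shift in the opposite direction (from $C_{f(X)}$ down to $A-bI$ rather than from $C_{f(X+b)}$ up to $C_{f(X+b)}+bI$) and justify the preservation of the nonderogatory property via $F[A-bI]=F[A]$, which the paper leaves implicit.
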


\begin{proof} Since the minimal polynomial of $C_{f(X+b)}$ is
$f(X+b)$, that of $C_{f(X+b)}+ b I$ is $f(X)$. As $C_{f(X)}$ and
$C_{f(X+b)}+b I$ have the same minimal polynomial, whose degree is
the size of these matrices, they must be similar.
\end{proof}

A more general result than Lemma \ref{gam}, found in \cite {GS},
reads as follows.

\begin{prop} Let $f,g\in F[X]$, where $f$
is monic of degree $m\geq 1$, and $g$ has degree $d\geq 1$ and
leading coefficient $a$. Then
\begin{equation}
\label{ind2}
 g(C_{a^{-m}}f(g(X)))\sim C_f\oplus\cdots\oplus
C_f,\quad d\text{ times}.
\end{equation}
\end{prop}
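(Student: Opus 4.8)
The plan is to recast the statement in the language of quotient rings, where the companion matrix becomes multiplication by the residue class of $X$. Write $p(X)=a^{-m}f(g(X))$; since $f$ is monic of degree $m$ and $g$ has degree $d$ with leading coefficient $a$, the polynomial $f(g(X))$ has leading coefficient $a^{m}$, so $p$ is monic of degree $md$. Identify the column space $F^{md}$ with $R=F[X]/(p)$, on which $C_{p}$ acts as multiplication by the image $x$ of $X$. Under this identification $g(C_{p})$ is exactly multiplication by $y:=g(x)$, so it suffices to compute the rational canonical form of the $F$-linear map ``multiply by $y$'' on $R$.

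First I would observe that $a^{-m}f(y)=a^{-m}f(g(x))=p(x)=0$ in $R$, so $f(y)=0$; hence $y$ is a root of the monic polynomial $f$ of degree $m$, and the subalgebra $F[y]\subseteq R$ is a homomorphic image of $F[X]/(f)$, giving $\dim_F F[y]\le m$. Next I would exhibit the tower $F[y]\subseteq F[x]=R$: from $ax^{d}+c_{d-1}x^{d-1}+\cdots+c_{0}=g(x)=y$ with $c_i\in F$, one solves for $x^{d}$ as an $F[y]$-combination of $1,x,\dots,x^{d-1}$, and by induction every power of $x$ lies in the $F[y]$-span of these $d$ elements. Thus $R$ is spanned over $F[y]$ by $1,x,\dots,x^{d-1}$, whence $\dim_F R\le d\cdot\dim_F F[y]$.

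Then the dimension bookkeeping closes the argument: $md=\dim_F R\le d\cdot\dim_F F[y]\le d\cdot m=md$, forcing equality throughout. In particular $\dim_F F[y]=m$, so the minimal polynomial of $y$ is exactly $f$ and $F[y]\cong F[X]/(f)$; and the surjection $F[y]^{d}\to R$ sending the standard basis to $1,x,\dots,x^{d-1}$ is an isomorphism of $F$-spaces of equal dimension, so $R$ is free of rank $d$ as an $F[y]$-module. Consequently multiplication by $y$ on $R$ is the direct sum of $d$ copies of multiplication by $y$ on $F[y]\cong F[X]/(f)$; and the latter, read off in the basis $1,X,\dots,X^{m-1}$, is precisely the companion matrix $C_{f}$. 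This yields $g(C_{p})\sim C_{f}\oplus\cdots\oplus C_{f}$ ($d$ times), as claimed.

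I expect the genuine content to be concentrated in the freeness step and in showing the minimal polynomial of $y$ is \emph{exactly} $f$ rather than a proper divisor---a priori $f(y)=0$ only bounds it. The elegant point is that the single dimension count $md\le d\cdot\dim_F F[y]\le md$ simultaneously pins down $\dim_F F[y]=m$ and upgrades the spanning set $1,x,\dots,x^{d-1}$ to a free $F[y]$-basis; everything else is routine translation between matrices and the module $R$. The only hypotheses that matter are $f$ monic (so $y$ satisfies a monic relation and $F[y]$ is the expected quotient) and $a\ne 0$ (so that $p$ is honestly monic of degree $md$ and the relation for $x^{d}$ can be solved).
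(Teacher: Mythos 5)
Your argument is correct, and every step withstands scrutiny: $f(g(X))$ has leading coefficient $a^m$ and degree $md$, so $p=a^{-m}f(g(X))$ is monic and the identification $F^{md}\cong R=F[X]/(p)$ realizes $g(C_p)$ as multiplication by $y=g(x)$; the relation $f(y)=a^m p(x)=0$ bounds $\dim_F F[y]\le m$; invertibility of the leading coefficient $a$ lets you solve $g(x)=y$ for $x^d$, so $1,x,\dots,x^{d-1}$ span $R$ over $F[y]$; and the squeeze $md=\dim_F R\le d\cdot\dim_F F[y]\le dm$ forces the minimal polynomial of $y$ to be exactly $f$ and upgrades the spanning set to a free $F[y]$-basis, after which multiplication by $y$ restricted to each free summand $x^iF[y]$ is $C_f$ in the basis $x^i,yx^i,\dots,y^{m-1}x^i$, giving the block decomposition. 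Be aware, however, that there is no proof in the paper to compare against: the proposition is imported verbatim from \cite{GS} and stated without argument, the only case proved in the paper being Lemma \ref{gam}, i.e. $g(X)=X+b$ (so $d=1$, $a=1$), which is handled there by observing that $C_{f(X+b)}+bI$ and $C_{f(X)}$ share the minimal polynomial $f$, whose degree equals the matrix size. Your proof subsumes that argument: when $d=1$ your dimension count collapses to precisely this minimal-polynomial comparison, while for $d>1$ the genuine extra content --- that $\mu_y$ is $f$ itself rather than a proper divisor, and that $R$ is honestly free of rank $d$ over $F[y]$ rather than merely spanned by $d$ elements --- is exactly what the single count delivers, as you yourself identify. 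One cosmetic remark: the displayed formula in the statement carries a typesetting slip, $C_{a^{-m}}f(g(X))$ for the intended $C_{a^{-m}f(g(X))}$, which you interpreted correctly.
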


It is not difficult to verify (see \cite{CS3}) that if
$S\in\GL_m(F)$ is the Pascal matrix
$$
S=\left(
    \begin{array}{cccccc}
      1 & b & b^2 &  b^3 & \dots &   b^{m-1} \\
      0 & 1 & 2b & 3 b^2 & \dots & {{m-1}\choose{1}}b^{m-2}\\
       0 & 0 & 1 & 3 b & \dots & {{m-1}\choose{2}}b^{m-3}\\
        0 & 0 & 0 & 1 & \dots & {{m-1}\choose{3}}b^{m-4}\\
        \vdots & \vdots & \vdots &  & \ddots & \vdots\\
      0 & 0 & 0 & \dots & \dots & 1 \\
    \end{array}
  \right)
$$
then
\begin{equation}
\label{ewq} S^{-1}(C_{f(X+b)}+b I)S=C_{f(X)}.
\end{equation}
We leave it to the reader to determine if, in analogy with
(\ref{ewq}), it is possible to choose a similarity transformation
in (\ref{ind2}) that depends on $g$ but not on $f$.

%\begin{prop} Let $g\in F[X]$ have degree $d\geq 1$ and
%leading coefficient $a$. Then there exists $S\in\GL_m(F)$ such
%that for any monic polynomial $f\in F[X]$ of degree~$m$, we have
%\begin{equation}
%\label{ind4} S^{-1}g(C_{a^{-m}}f(g(X)))S= C_f\oplus\cdots\oplus
%C_f,\quad d\text{ times}.
%\end{equation}
%\end{prop}

\begin{lemma}\label{l3} Let $A\in \gl(m)$ and suppose that the centralizer $C$ of
$A$ is a subfield of $M_m(F)$. Suppose further that $ad\, A$ has
at least one non-zero eigenvalue $b\in F$. Then $F$ has prime
characteristic $p$, every $b$-eigenvector of $ad\, A$ is
invertible in $M_m(F)$, and $E_b(ad\, A)$ has the same dimension
as $C=E_0(ad\, A)$.
\end{lemma}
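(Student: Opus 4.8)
The plan is to exploit the two hypotheses separately: the centralizer being a field pins down how $A$ acts on $F^m$, while the existence of a nonzero eigenvalue of $ad\, A$ forces the characteristic to be positive and makes the eigenspaces interchangeable. First I would record that $E_0(ad\, A)$ is exactly $C$, since $[A,B]=0$ means precisely $AB=BA$. By Lemma \ref{l1}, the hypothesis that $C$ is a field is equivalent to $A$ being similar to the companion matrix of a monic irreducible polynomial of degree $m$; equivalently, $K=F[A]$ is a field and the column space $V=F^m$, viewed as a $K$-module, is $1$-dimensional, hence simple. Thus the only $A$-invariant subspaces of $V$ are $0$ and $V$.

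Next I would prove invertibility. Let $B$ be a nonzero $b$-eigenvector, so $AB-BA=bB$, that is $BA=AB-bB$. For $v\in\ker B$ one computes $B(Av)=(BA)v=(AB-bB)v=0$, so $\ker B$ is $A$-invariant. By the previous step $\ker B$ is $0$ or $V$; since $B\neq 0$ it must be $0$, so $B$ is injective and therefore invertible in $M_m(F)$. The positivity of the characteristic then drops out: from $AB=B(A+bI)$ and the invertibility of $B$ I get $B^{-1}AB=A+bI$, so $A$ and $A+bI$ are similar and hence have equal trace. Comparing traces gives $mb=0$, and since $b\neq 0$ this forces $m\cdot 1_F=0$, so $\chr(F)=p$ for some prime $p$.

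Finally, for the dimension statement I would fix one invertible $b$-eigenvector $B_0$ and consider the $F$-linear map $C=E_0(ad\, A)\to E_b(ad\, A)$ given by $X\mapsto B_0X$. Since $ad\, A$ is a derivation, applying it to $B_0B_0^{-1}=I$ yields $[A,B_0^{-1}]=-bB_0^{-1}$; using this together with the derivation property one checks that $X\in C$ implies $[A,B_0X]=[A,B_0]X+B_0[A,X]=bB_0X$, so $B_0X\in E_b$, while $Y\in E_b$ implies $[A,B_0^{-1}Y]=[A,B_0^{-1}]Y+B_0^{-1}[A,Y]=0$, so $B_0^{-1}Y\in C$. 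Hence $X\mapsto B_0X$ has inverse $Y\mapsto B_0^{-1}Y$ and is an isomorphism, giving $\dim E_b(ad\, A)=\dim C$.

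The only real subtlety is the first step: once the field hypothesis on $C$ is translated, via Lemma \ref{l1}, into the simplicity of $V$ as an $F[A]$-module, the invertibility of eigenvectors and the remaining assertions are essentially formal consequences of the derivation identity for $ad\, A$. I therefore expect no serious obstacle, and would keep the exposition organized around that single structural observation.
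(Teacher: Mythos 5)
Your proof is correct, but it takes a genuinely different route from the paper's. The paper argues from field theory outward: it passes to a splitting field of $\mu_A$, uses Lemma \ref{l2} to produce roots $\beta$ and $\beta+b$ of the irreducible polynomial $\mu_A$, deduces $\mu_A(X)=\mu_A(X+b)$ (whence the prime field of $F$ is finite, giving characteristic $p$), and only then invokes Lemma \ref{gam} to conclude $A\sim A+bI$; writing $A+bI=SAS^{-1}$, it converts the eigenvector equation $AX=X(A+bI)$ into $AXS=XSA$, so that by cyclicity of $A$ one gets $E_b(ad\, A)=F[A]S^{-1}$, which delivers invertibility of eigenvectors and the dimension count at one stroke. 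You reverse the logical flow and stay entirely over $F$: simplicity of $V=F^m$ as an $F[A]$-module (the content of Lemma \ref{l1}) shows that $\ker B$ is $A$-invariant, hence zero, so every nonzero $b$-eigenvector $B$ is invertible \emph{first}; the similarity $B^{-1}AB=A+bI$ then drops out for free, and your trace comparison $mb=0$ yields prime characteristic (indeed $p\mid m$) in one line, replacing the paper's splitting-field and automorphism-counting argument. Your dimension count via $X\mapsto B_0X$, using the derivation identity and the verification $[A,B_0^{-1}]=-bB_0^{-1}$, is likewise correct and intrinsic, where the paper instead reads off $\dim E_b(ad\, A)=\dim F[A]$ from the parametrization $E_b(ad\, A)=F[A]S^{-1}$. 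What the paper's route buys is that explicit parametrization by a concrete similarity $S$ (compare the Pascal matrix in (\ref{ewq})) together with the identity $\mu_A(X)=\mu_A(X+ib)$ for all $i$ in the prime field, which mesh with machinery used elsewhere; what yours buys is economy: no splitting field, no Lemma \ref{l2}, no Lemma \ref{gam}. One point worth making explicit in your write-up: the hypothesis guarantees $E_b(ad\, A)\neq 0$, so the invertible eigenvector $B_0$ needed in your final step exists --- you use this tacitly.
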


\begin{proof} Let $K$ be a splitting field for $\mu_A$ over $F$.
By Lemma \ref{l2} and assumption, there are eigenvalues
$\alpha,\beta\in K$ of $A$ such that $\alpha=\beta+b$ for some
$b\in F$. By Lemma \ref{l1}, $\mu_A$ is irreducible. By \cite{J},
Theorem 4.4, there is an automorphism of $K/F$ such that
$\beta\mapsto\beta+b$, where $|\mathrm{Aut}(K/F)|\leq [K:F]$ is
finite. Since $b\neq 0$, $F$ must have prime characteristic $p$.
Alternatively, $\mu_A(X)$ and $\mu_A(X+b)$ have a common root
$\beta$. Since they are irreducible in $F[X]$, they must be equal.
It follows that $\mu_A(X)=\mu_A(X+ib)$, and hence $\alpha+ib$ is a
root of $\mu_A(X)$, for every $i$ in the prime field of $F$. This
forces the prime field of $F$ to be finite.

Now $X\in E_b(ad\, A)$ if and only if
$$
AX-XA=bX,
$$
i.e.,
\begin{equation}
\label{es} AX=X(A+bI).
\end{equation}
By Lemma \ref{l1}, $A\sim C_{\mu_A(X)}$. Hence
$\mu_A(X)=\mu_A(X+b)$ and Lemma \ref{gam} imply $A\sim A+b I$.
Thus, there is $S\in\GL_m(F)$ such that $A+bI=SAS^{-1}$. Replacing
this in (\ref{es}), yields
$$
AX=XSAS^{-1},
$$
i.e.,
$$
AXS=XSA.
$$
Since $A$ is cyclic, this equivalent, by \cite{J}, \S 3.11, to
$XS\in F[A]$, or $X=f(A)S^{-1}$, for some $f\in F[X]$. But $F[A]$
is a field, so the result follows.
\end{proof}

\section{Decomposition numbers}\label{ma4}

Let $F$ be a field and let $L=\langle x\rangle$ be a 1-dimensional
Lie algebra over $F$. Let $V$ be an $L$-module of dimension $n$
and let $x_V$ be the linear operator that $x$ induces on $V$.
Suppose that $x_V$ has at least one eigenvalue in $F$ and that $V$
is an indecomposable $L$-module. This means that there is a basis
$B$ of $V$ relative to which the matrix $M_B(x_V)$ of $x_V$ is the
upper triangular Jordan block $J_n(\alpha)$, where $\alpha\in F$
is the only eigenvalue of $x_V$. %In this case we will say that $V$
%is of type $(m,\alpha)$.

Suppose next that $W$ is an indecomposable $L$-module of dimension
$m$ and that $x_W$ has eigenvalue $\beta\in F$. As above, there is
a basis $C$ of $W$ relative to which $M_C(x_V)=J_m(\beta)$.

As usual, we may view $V\otimes W$ as an $L$-module via
$$
x(v\otimes w)=xv\otimes w+v\otimes xw,\quad v,w\in V.
$$
Let $x_{V\otimes W}$ be the linear operator that $x$ induces on
$V\otimes W$. It is easy to see that the minimal polynomial of
$x_{V\otimes W}$ splits in $F$ and a single eigenvalue, namely
$\alpha+\beta$. This follows from the well-known formula:
\begin{equation}
\label{car} (x-(\alpha+\beta)\cdot 1)^k(v\otimes
w)=\underset{0\leq i\leq k}\sum{{k}\choose{i}}(x-\alpha\cdot
1)^{k-i}(v)\otimes (x-\beta\cdot 1)^i(w).
\end{equation}
\begin{question}\label{q3} How does $V\otimes W$ decompose as a direct sum of
indecomposable $L$-modules?
\end{question}
That is, what is the length $\ell$ of $V\otimes W$ and what are
the decomposition numbers $d_1\geq \cdots\geq d_\ell$ such that
$$
x_{V\otimes W}\sim J_{d_1}(\alpha+\beta)\oplus\cdots\oplus
J_{d_\ell}(\alpha+\beta)?
$$
Replacing $x_V$ by $x_V-\alpha\cdot 1$, $x_W$ by $x_W-\beta\cdot
1$, and $x_{V\otimes W}$ by $x_{V\otimes W}-(\alpha+\beta)\cdot
1$, we see that $\ell$ and the decomposition numbers $d_1\geq
\cdots\geq d_\ell$ are independent of $\alpha$ and $\beta$, and
can be computed when $\alpha=0=\beta$.

When $F$ has characteristic 0 and $n\leq m$ then $\ell=n$, with
decomposition numbers
$$
m+n-1,m+n-3,\dots,m-n+3,m-n+1.
$$
This can obtained by imbedding $L$ into $\sl(2)$ and using the
Clebsch-Gordan formula \cite{H}, \S 22.5. %These decomposition
%numbers are used in \cite{CS} to classify all uniserial modules
%for a class of solvable Lie algebras.

The analogue of Question \ref{q3} for a cyclic $p$-group and $F$
of prime characteristic~$p$ was solved by B. Srinivasan \cite{S}.
The answer is given recursively, rather than as a closed formula.
Alternative algorithms can be found in \cite{Ra} and \cite{Re}. Presumably,
Srinivasan's results translate to our present set-up mutatis
mutandis.

This section furnishes a closed formula in answer to Question
\ref{q3}, albeit only in the special case $m=p^e$, where $F$ has
prime characteristic $p$ and $e\geq 0$, as required in the proof
of Theorem \ref{uno}.

%The corresponding problem for the linear operator $x_V\otimes
%x_W$, rather than $x_{V\otimes W}=x_V\otimes 1_W+ 1_V\otimes x_W$,
%was recently solved in \cite{I}.Information on the length of
%$V\otimes W$ for the group version of this problem and in the case
%of cyclic groups of order $p$, or products of these, can be found
%in \cite{GM}, \S2.

\begin{lemma}\label{fin0} Let $p$ be a prime and let $e\geq 0$.
Then $p|{{p^e}\choose{i}}$ for any $0<i<p^e$.
\end{lemma}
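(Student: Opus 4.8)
The plan is to exploit the Frobenius endomorphism of the polynomial ring $\EF_p[X]$, converting the divisibility assertion into a comparison of coefficients. First I would dispose of the degenerate case $e=0$: then $p^e=1$ and there is no integer $i$ with $0<i<1$, so the statement holds vacuously. Hence I may assume $e\geq 1$ from now on.

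The key step is the identity $(1+X)^{p}=1+X^{p}$ in $\EF_p[X]$, which reflects the fact that the $p$-th power map is a ring endomorphism in characteristic $p$. Iterating this identity $e$ times yields
$$
(1+X)^{p^e}=1+X^{p^e}\quad\text{in }\EF_p[X].
$$
On the other hand, the integer binomial theorem gives $(1+X)^{p^e}=\sum_{i=0}^{p^e}\binom{p^e}{i}X^i$ in $\Z[X]$. Reducing this equation modulo $p$ and comparing it with the Frobenius identity above, I conclude that the coefficient of $X^i$ must vanish in $\EF_p$ for every $0<i<p^e$; that is, $p\mid\binom{p^e}{i}$, which is exactly the desired conclusion.

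I do not expect a genuine obstacle here: the only point deserving care is the base identity $(1+X)^p=1+X^p$, which can be justified either from the ring-homomorphism property of Frobenius or, more elementarily, from the $p$-adic valuation count $v_p(p!)=1$ together with $v_p(i!)=0=v_p((p-i)!)$ for $0<i<p$, so that $\binom{p}{i}=p!/\bigl(i!\,(p-i)!\bigr)$ carries exactly one factor of $p$. An alternative, purely arithmetic route would invoke Kummer's theorem: $v_p\binom{p^e}{i}$ equals the number of carries incurred when adding $i$ and $p^e-i$ in base $p$, and since $p^e$ has base-$p$ expansion $(1,0,\dots,0)$ while $0<i<p^e$ forces a nonzero digit below the $e$-th place, at least one carry must occur. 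Either formulation settles the lemma with minimal computation.
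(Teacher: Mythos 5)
Your proof is correct, but it takes a genuinely different route from the paper's. The paper's argument is a one-liner: from the absorption identity $i\binom{p^e}{i}=p^e\binom{p^e-1}{i-1}$ and the observation that the largest power of $p$ dividing $i$ is at most $p^{e-1}$ when $0<i<p^e$, it follows immediately that $p\mid\binom{p^e}{i}$ (and in fact gives the sharper quantitative bound $v_p\binom{p^e}{i}\geq e-v_p(i)\geq 1$). You instead iterate the Frobenius identity $(1+X)^p=1+X^p$ in $\EF_p[X]$ and compare coefficients against the binomial expansion of $(1+X)^{p^e}$. This is sound, and you correctly spotted the one delicate point: the base identity $(1+X)^p=1+X^p$ is itself the $e=1$ case of the lemma, so simply invoking ``Frobenius is a ring endomorphism'' would be circular; you defused this with the direct valuation count $v_p(p!)=1$, $v_p(i!)=v_p((p-i)!)=0$ for $0<i<p$. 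As for what each approach buys: the paper's identity-based proof is shorter and yields the exact $p$-adic information for free, while your Frobenius argument is conceptually transparent and generalizes naturally toward Lucas-type congruences; the Kummer-theorem variant you sketch is also valid but imports machinery well beyond what this lemma requires.
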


\begin{proof} Since $i{{p^e}\choose{i}}=p^e{{p^e-1}\choose{i-1}}$ and the largest power of $p$ dividing $i$ is at most
$p^{e-1}$, it follows that $p$ divides ${{p^e}\choose{i}}$.
\end{proof}

\begin{prop}\label{fin} Let $F$ be a field of prime characteristic $p$ and let $e\geq 0$.
Let $L=\langle x\rangle$ be a 1-dimensional Lie algebra over $F$.
Let $V$ and $W$ be indecomposable $L$-modules of dimensions $n$
and $p^e$, respectively, where $n\leq p^e$. Suppose that $x$ has
eigenvalues $\alpha,\beta\in F$ when acting on $V$ and $W$,
respectively. Then the $L$-module $V\otimes W$ decomposes as the
direct sum of $n$ isomorphic indecomposable $L$-modules, each of
which has dimension $p^e$ and is acted upon by $x$ with a single
eigenvalue $\alpha+\beta$. In symbols,
$$
\ell=n\text{ and }d_1=\dots=d_{p^e}=p^e.
$$
\end{prop}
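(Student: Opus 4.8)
The plan is to reduce to the case $\alpha=\beta=0$ (as the paper explicitly notes this is legitimate), so that $x_V$ is nilpotent with a single Jordan block $J_n(0)$ and $x_W=J_{p^e}(0)$, and $x_{V\otimes W}$ is nilpotent. The decomposition numbers of a nilpotent operator are governed entirely by the ranks of its powers: if $x_{V\otimes W}$ has decomposition into blocks of sizes $d_1\geq\cdots\geq d_\ell$, then the number of blocks of size $\geq k$ equals $\mathrm{rank}(x_{V\otimes W}^{k-1})-\mathrm{rank}(x_{V\otimes W}^{k})$. So everything comes down to computing the nullities (equivalently, ranks) of the powers of $x_{V\otimes W}$, and in particular to showing that $x_{V\otimes W}^{p^e}=0$ while $x_{V\otimes W}^{p^e-1}\neq 0$ has the right rank.

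The key computational tool is formula~(\ref{car}). With $\alpha=\beta=0$ it reads
$$
x_{V\otimes W}^{k}(v\otimes w)=\sum_{0\leq i\leq k}{{k}\choose{i}}\,x_V^{k-i}(v)\otimes x_W^{i}(w).
$$
The crucial step is to evaluate this at $k=p^e$. Here Lemma~\ref{fin0} does the decisive work: all the binomial coefficients ${{p^e}\choose{i}}$ for $0<i<p^e$ vanish in $F$, leaving only the two extreme terms $i=0$ and $i=p^e$. Since $\dim W=p^e$ we have $x_W^{p^e}=0$, and since $n\leq p^e$ we have $x_V^{p^e}=0$ as well; hence $x_{V\otimes W}^{p^e}=0$. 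This immediately shows that every decomposition number satisfies $d_j\leq p^e$, so no block exceeds dimension $p^e$.

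The second half is to show the blocks are in fact \emph{all} of size exactly $p^e$, i.e.\ that $x_{V\otimes W}^{p^e-1}\neq 0$ and more precisely that $\mathrm{rank}(x_{V\otimes W}^{p^e-1})=n$; since $\dim(V\otimes W)=np^e$, this would force exactly $n$ blocks, each of size $p^e$, giving $\ell=n$ and $d_1=\cdots=d_{p^e}=p^e$. The clean way to get this is to exhibit $n$ vectors that are not killed by $x_{V\otimes W}^{p^e-1}$ and span a space of the right dimension, or equivalently to pin down $\ker x_{V\otimes W}$: I expect $\dim\ker x_{V\otimes W}=n$, which (together with the uniform block size $p^e$ just established) already determines the answer since the number of blocks equals $\dim\ker x_{V\otimes W}$. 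Concretely, writing $v_1,\dots,v_n$ and $w_1,\dots,w_{p^e}$ for Jordan bases of $V$ and $W$, one computes that $x_{V\otimes W}(v_i\otimes w_j)=v_{i-1}\otimes w_j+v_i\otimes w_{j-1}$ and identifies the kernel directly.

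The main obstacle is the second half: showing that the kernel has dimension exactly $n$ and no block is shorter than $p^e$. The vanishing $x_{V\otimes W}^{p^e}=0$ via Lemma~\ref{fin0} is the elegant, easy part, but it only gives an upper bound on block sizes; establishing that the bound is attained \emph{uniformly} requires a genuine rank/kernel computation on the tensor product, and this is where the constraint $n\leq p^e$ and the precise structure of the two Jordan blocks must be used carefully to avoid off-by-one errors in the nullity count.
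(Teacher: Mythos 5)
Your proposal is correct, and its second half takes a genuinely different route from the paper. Both arguments share the first step: Lemma \ref{fin0} applied to (\ref{car}) gives $x_{V\otimes W}^{p^e}=0$, so every block has size at most $p^e$. From there the paper argues constructively: with $m=p^e$, it shows by induction that the $F[X]$-submodule $N_i$ generated by $v_1\otimes w_m,\dots,v_i\otimes w_m$ has elementary divisors $X^m,\dots,X^m$ with multiplicity $i$, the key point being that $x^{m-1}(v_{i+1}\otimes w_m)$ involves $v_{i+1}\otimes w_1$ with coefficient $1$, hence lies outside $N_i$, so the coset $v_{i+1}\otimes w_m+N_i$ has minimal polynomial $X^m$ and the structure theory of modules over a PID advances the induction; a dimension count finishes. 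You instead count blocks: $\ell=\dim\ker x_{V\otimes W}$, and once $\dim\ker x_{V\otimes W}=n$ is known, the identity $np^e=d_1+\cdots+d_n$ with each $d_j\le p^e$ forces $d_j=p^e$ for all $j$ --- an airtight reduction. The one piece you leave unexecuted, $\dim\ker x_{V\otimes W}=n$, is true and elementary, though fiddly exactly where you predict: writing $u=\sum c_{ij}\,v_i\otimes w_j$, the kernel conditions read $c_{i+1,j}+c_{i,j+1}=0$ (out-of-range coefficients read as $0$), so coefficients propagate along anti-diagonals $i+j=\mathrm{const}$ with alternating signs; the boundary relations at $i=n$ and at $j=m$ annihilate every anti-diagonal with $i+j>n+1$, while each anti-diagonal $i+j=s+1$ with $1\le s\le n$ contributes exactly one kernel vector, e.g. $u_s=\sum_{a=1}^{s}(-1)^a\,v_a\otimes w_{s+1-a}$. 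It is worth noting that this kernel count is characteristic-free and never uses $m=p^e$ (in general it gives $\min(n,m)$), so your proof cleanly isolates the char-$p$ input in the single vanishing $x_{V\otimes W}^{p^e}=0$; what the paper's construction buys in exchange is explicit cyclic generators for the $n$ indecomposable summands and no rank/nullity bookkeeping at all.
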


\begin{proof} As mentioned above, we may assume without loss of
generality that $\alpha=0$ and $\beta=0$, and we will do so,
mainly for simplicity of notation. For the same reason, we let
$m=p^e$.

Let $B=\{v_1,\dots,v_n\}$ and $C=\{w_1,\dots,w_m\}$ be bases of
$V$ and $W$ relative to which $M_B(x_V)=J_n(0)$ and
$M_C(x_W)=J_m(0)$.

Since $n\leq m$, we have
$$x_V^m=0\text{ and }x_W^m=0.
$$
Therefore, Lemma \ref{fin0} and (\ref{car}) imply
$$
x_{V\otimes W}^m=0.
$$
We next view $M=V\otimes W$ as a module for the polynomial algebra
$F[X]$ via $x_{V\otimes W}$. We wish to show that $M$ has
elementary divisors $X^m,\dots,X^m$, with multiplicity~$n$.

It follows from (\ref{car}) that
$$
x^{m-1}(v_1\otimes w_m)=v_1\otimes w_1\neq 0.
$$
Let $N_1$ be the $F[X]$-submodule of $M$ generated by $v_1\otimes
w_m$. Then $N_1$ has a single elementary divisor, namely $X^m$.

Suppose that $1\leq i<n$ and the $F[X]$-submodule of $M$, say
$N_i$, generated by $v_1\otimes w_m,\dots,v_i\otimes w_m$ has
elementary divisors $X^m,\dots,X_m$, with multiplicity~$i$. Using
(\ref{car}) we see that $v_{i+1}\otimes w_1$ appears in
$x^{m-1}(v_{i+1}\otimes w_m)$ with coefficient~1. Since
$v_{i+1}\otimes w_1\notin N_i$, the minimal polynomial of the
vector $v_{i+1}\otimes w_m+N_i\in M/N_i$ is~$X^m$. The theory of
finitely generated modules over a principal ideal domain implies
that the $F[X]$-submodule of $M$ generated by $v_1\otimes
w_m,\dots,v_i\otimes w_m,v_{i+1}\otimes x_m$ has elementary
divisors $X^m,\dots,X_m$, with multiplicity $i+1$. The result now
follows.
\end{proof}

\begin{note} Unlike what happens in characteristic 0, the
decomposition of $V\otimes W$ for $L$ is not, in general, the same
as for $\sl(2)$. Indeed, suppose $F$ has characteristic 2 and let
$V=W$ be the natural module for $\sl(2)$. Then $V\otimes W$ is an
indecomposable $\sl(2)$-module, but decomposes as the direct sum
of two indecomposable $L$-modules for $L=\langle x\rangle$, where
$x,h,y$ is the standard basis of $\sl(2)$.
\end{note}

Resuming our prior discussion, let $F$ be a field and let
$L=\langle x\rangle$ be a 1-dimensional Lie algebra over $F$. Let
$V_1,\dots,V_s$ be $L$-modules with bases $B_1,\dots,B_s$ relative
to which $M_{B_i}(x_{V_i})=J_{m_i}(\alpha_i)$, where $1\leq i\leq
s$ and $\alpha_i\in F$. Consider the $L$-module
$V=V_1\oplus\cdots\oplus V_s$. We may view $\gl(V)$ as an
$L$-module via:
$$
x\cdot f=x_V f-fx_V.
$$
Thus $x$ acts on $\gl(V)$ via $ad\, x_V$. By Lemma \ref{l2}, the
eigenvalues of $ad\, x_V$ are $\alpha_i-\alpha_j$, where $1\leq
i,j\leq s$. We can view $\gl(V)$ as the direct sum of the
$L$-submodules
$$
\mathrm{Hom}(V_j,V_i)\cong V_j^*\otimes V_i,\quad 1\leq i,j\leq s.
$$
Here $V_j^*$ is an indecomposable $L$-module upon which $x$ acts
with eigenvalue $-\alpha_j$. It is then clear that the generalized
eigenspace of $ad\, x_V$ for a given eigenvalue $\gamma$ is the
sum of all $\mathrm{Hom}(V_j,V_i)$ such that
$\alpha_i-\alpha_j=\gamma$. %Thus, the degree of the minimal
%polynomial of the restriction of $ad\, x_V$ to the generalized
%eigenspace with eigenvalue $\gamma$ is the largest of the
%decomposition numbers of all $V_j^*\otimes V_i$ such that
%$\alpha_i-\alpha_j=\gamma$.

\begin{cor}\label{co1} Keep the above notation and suppose, further, that $F$ has prime characteristic $p$ and
all $L$-modules $V_i$ have the same dimension $p^e$, for some
$e\geq 0$. Let $S=\{\alpha_i-\alpha_j\,|\, 1\leq i,j\leq s\}$, the
set of distinct eigenvalues of $ad\, x_V$. Then the minimal
polynomial of $ad\, x_V$ is $\underset{\gamma\in
S}\Pi(X-\gamma)^{p^e}$. Moreover, the elementary divisors of $ad\,
x_V$ are $(X-\gamma)^{p^e}$, $\gamma\in S$, with multiplicity
$p^em(\gamma)$, where
$$
m(\gamma)=|\{(i,j)\,|\, 1\leq i,j\leq s,\;
\alpha_i-\alpha_j=\gamma\}|.
$$
\end{cor}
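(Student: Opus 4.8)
The plan is to reduce the corollary directly to Proposition \ref{fin} by exploiting the decomposition of $\gl(V)$ into the $L$-submodules $\mathrm{Hom}(V_j,V_i)\cong V_j^*\otimes V_i$ already described in the paragraph preceding the statement. First I would recall that $ad\, x_V$ acts on each summand $\mathrm{Hom}(V_j,V_i)$ as the operator $x_{V_j^*\otimes V_i}$, and that $V_j^*$ is an indecomposable $L$-module of dimension $p^e$ on which $x$ acts with single eigenvalue $-\alpha_j$, while $V_i$ is indecomposable of dimension $p^e$ with eigenvalue $\alpha_i$. Since both factors have dimension exactly $p^e$, the hypothesis $n\leq m=p^e$ of Proposition \ref{fin} is satisfied with $n=p^e$, so the proposition applies to each pair $(i,j)$.

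Next I would apply Proposition \ref{fin} to conclude that each $V_j^*\otimes V_i$ decomposes as a direct sum of $p^e$ indecomposable $L$-modules, each of dimension $p^e$ and each acted upon by $x$ with single eigenvalue $(-\alpha_j)+\alpha_i=\alpha_i-\alpha_j$. In terms of the operator $ad\, x_V$ restricted to this summand, this says that its action is similar to $J_{p^e}(\alpha_i-\alpha_j)$ repeated $p^e$ times, so the corresponding elementary divisors are $(X-(\alpha_i-\alpha_j))^{p^e}$ with multiplicity $p^e$. Then I would assemble these contributions over all pairs $(i,j)$ with $1\leq i,j\leq s$: summing the multiplicities of a fixed elementary divisor $(X-\gamma)^{p^e}$ over all pairs producing the eigenvalue $\gamma$ gives total multiplicity $p^e\,m(\gamma)$, which is exactly the claimed count. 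The minimal polynomial is then the least common multiple of the distinct elementary divisors $(X-\gamma)^{p^e}$, $\gamma\in S$, which, since the $\gamma$ are distinct, is $\prod_{\gamma\in S}(X-\gamma)^{p^e}$.

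The only genuinely delicate point is the legitimacy of reading off the global elementary divisors of $ad\, x_V$ by concatenating the elementary divisors of its restrictions to the $L$-submodules $\mathrm{Hom}(V_j,V_i)$. This is valid because $\gl(V)$ is the direct sum of these $L$-submodules and $ad\, x_V$ preserves each of them, so $\gl(V)$ as an $F[X]$-module (via $ad\, x_V$) is the direct sum of the corresponding $F[X]$-submodules; the elementary divisors of a direct sum are the union, with multiplicity, of the elementary divisors of the summands. I would state this structure theorem appeal explicitly, since it is the step carrying all the force of the argument.

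Thus the main obstacle is not any computation but the bookkeeping: one must verify that the eigenvalue attached to the summand $\mathrm{Hom}(V_j,V_i)$ is $\alpha_i-\alpha_j$ (with the correct sign coming from the dual $V_j^*$), and that grouping pairs by the value of $\alpha_i-\alpha_j$ correctly produces the multiplicity $m(\gamma)$. Once these identifications are in place, Proposition \ref{fin} supplies every local decomposition and the direct-sum structure of $\gl(V)$ delivers the global elementary divisors immediately.
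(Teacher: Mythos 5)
Your proposal is correct and follows exactly the route the paper intends: the corollary is stated as an immediate consequence of the decomposition $\gl(V)=\bigoplus_{i,j}\mathrm{Hom}(V_j,V_i)\cong\bigoplus_{i,j}V_j^*\otimes V_i$ set up in the preceding paragraph, together with Proposition \ref{fin} applied to each summand with $n=m=p^e$. Your bookkeeping (eigenvalue $\alpha_i-\alpha_j$ on $\mathrm{Hom}(V_j,V_i)$, multiplicity $p^e m(\gamma)$ by summing over pairs, minimal polynomial as the least common multiple of the distinct elementary divisors) matches the paper's implicit argument, merely making explicit the standard fact about elementary divisors of a direct sum that the paper takes for granted.
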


\section{Proof of Theorem \ref{uno}}
\label{ma3}

Suppose conditions (C4)-(C6) hold. Then $q$ is irreducible, since
so is $h$. Let $K$ be a splitting field for $h$ over $F$. Since
$q$ is separable, the number of distinct roots of $h$ in $K$ is
exactly $p^n$.

Let $\beta\in K$ be a root of $h$. We readily verify that
$\beta+b$ is a root of $h$ for every $b\in \EF_{p^n}$. It follows
that $\beta+b$, $b\in \EF_{p^n}$, are the distinct roots of $h$ in
$K$ and each has multiplicity $p^e$. By Lemma \ref{l2}, $\mu_{ad\,
A}$ splits in $K$ and the set of eigenvalues of $ad\, A$ is
precisely $\EF_{p^n}$. Moreover, by Lemma \ref{l1}, the
centralizer of $A$ is a subfield of $M_m(F)$. In particular,
conditions (C1)-(C3) hold.

Furthermore, by Lemma \ref{l3}, all eigenvectors of $ad\, A$ are
invertible in $M_m(F)$ and all eigenspaces of $ad\, A$ have
dimension $p^{n+e}$. Thus, the sum of the dimensions of all
eigenspaces of $ad\, A$ is $p^{2n+e}$. This equals the dimension
of $\gl(m)$, namely $m^2=p^{2(n+e)}$, if and only if $e=0$.
Therefore, $ad\, A$ is diagonalizable if and only if $h$ is
separable.

Regardless of whether $e=0$ or not, we claim that the invariant
factors of $ad\, A$ are
$X^{p^{n+e}}-X^{p^e},\dots,X^{p^{n+e}}-X^{p^e}$, with multiplicity
$p^{n+e}$. For this purpose, we may assume without loss of
generality that $F=K$. Hence $\mu_{ad\, A}$ splits in $F$, by
Lemma \ref{l2}. Thus $A$ is similar to the direct sum of the
companion matrices to $X^{p^e}-\beta^{p^e}=(X-\beta)^{p^e}$, as
$\beta$ runs through the $p^n$ distinct roots of $h$ in $F$.
Hence, we may assume without loss of generality that $A$ is the
direct sum of the $p^n$ Jordan blocks $J_{p^e}(\beta)$, with
$\beta$ as above. Now, the eigenvalues of $ad\, A$ form the
subfield $\EF_{p^n}$ of $F$. Moreover, for each $b\in \EF_{p^n}$,
we have
$$
|\{(\beta_1,\beta_2)\,|\, h(\beta_1)=0=h(\beta_2),\,
\beta_1-\beta_2=b\}|=p^n.
$$
It follows from Corollary \ref{co1} that the elementary divisors
of $ad\, A$ are $(X-b)^{p^e}$, $b\in \EF_{p^n}$, each with
multiplicity $p^{n+e}$. Since $\underset{b\in
\EF_{p^n}}\Pi(X-b)=X^{p^n}-X$, the claim follows.

Suppose conversely that (C1)-(C3) hold. Since the eigenvalues of
$ad\, A$ form a subfield of $F$, we see that $ad\, A$ has a
non-zero eigenvalue in $F$. It follows from Lemma \ref{l3} that
(C4) holds. Since the centralizer of $A$ is a subfield of
$M_m(F)$, Lemma \ref{l1} shows that (C5) holds.

Let $K$ be a splitting field for $\mu_A$ over $F$. Let $q$ be the
separable part of $\mu_A$, so that $\mu_A(X)=q(X^{p^e})$ for some
$e\geq 0$.

Let $\EF_{p^n}$ be the subfield of $F$ formed by the roots of
$ad\, A$ ensured by (C3). Let $\beta\in K$ be a root of $\mu_A$.
It follows from Lemma \ref{l2} that the distinct roots of $\mu_A$
in $K$ are $\beta+b$, for all $b\in \EF_{p^n}$. Since $\mu_A$ and
its separable part must have the same number of distinct roots, we
deduce that $q$ has degree $p^n$. Let $\alpha=\beta^{p^e}\in K$.
Then $\alpha$ is a root of $q$. Moreover, if $c\in \EF_{p^n}$ then
$\alpha+c$ is also a root of $q$. Indeed, $b\mapsto b^{p^e}$ is an
automorphism of $\EF_{p^n}$, so $c=b^{p^e}$ for some $b\in
\EF_{p^n}$. Therefore,
$$
q(\alpha+c)=\mu_A(\beta+b)=0.
$$
Thus $F[\alpha]$ is a splitting field for $q$ over $F$. Since $q$
is separable, we deduce that $F[\alpha]/F$ is a finite Galois
extension, whose Galois group we denote by $G$. We claim that
$\alpha^{p^n}-\alpha\in F$. To see this, it suffices to show that
$\alpha^{p^n}-\alpha\in F$ is fixed by every $\sigma\in G$. Let
$\sigma\in G$. Then $\sigma(\alpha)$ must be a root of $q$, so
$\sigma(\alpha)=\alpha+b$ for some $b\in \EF_{p^n}$. Therefore,
$$
\sigma(\alpha^{p^n}-\alpha)=(\alpha+b)^{p^n}-(\alpha+b)=\alpha^{p^n}-\alpha,
$$
as required. Thus $\alpha^{p^n}-\alpha=a\in F$, so $X^{p^n}-X-a\in
F[X]$ has $\alpha$ as root. Hence $q|(X^{p^n}-X-a)$. Since these
polynomials have the same degree and are monic, they must be
equal. This completes the proof of the theorem.

\section{Primitive elements of intermediate fields in a Galois extension}
\label{alfaH}

Let $F$ be a field of prime characteristic $p$ having $\EF_{p^n}$
as a subfield and consider the generalized Artin-Schreier
polynomial $q=X^{p^n}-X-a\in F[X]$. Let $\alpha$ be a root of $q$
in a field extension of $F$. Then $\alpha+b$, $b\in \EF_{p^n}$,
are all roots of $q$, so $F[\alpha]$ is a splitting field for $q$
over $F$. Moreover, all roots of $q$ have the same degree over
$F$, since $F[\alpha]=F[\alpha+b]$ for any $b\in \EF_{p^n}$. Thus,
all irreducible factors of $q$ in $F[X]$ have the same degree. Let
$G$ be the Galois group of $F[\alpha]/F$. We claim that $G$ is
elementary abelian $p$-group. Indeed, let $\sigma,\tau\in G$. Then
$\sigma(\alpha)=\alpha+b$ and $\tau(\alpha)=\alpha+c$ for some
$b,c\in \EF_{p^n}$. Therefore $\sigma^p=1$ and
$\sigma\tau=\tau\sigma$, as claimed. Since $|G|=[F[\alpha]:F]$,
which is the degree the minimal polynomial of $\alpha$ over $F$,
we deduce that all irreducible factors of $q$ in $F[X]$ have
degree $p^m$ for a unique $0\leq m\leq n$

If $a=0$ it is obvious that $m=0$. If $a\neq 0$ and $F=\EF_{p^n}$
then $m=1$. More generally, if $F$ algebraic over $\EF_{p^n}$ and
there is no $b\in F$ such that $q(b)=0$ then $m=1$. Indeed, let
$f$ be the minimal polynomial of $\alpha$ over $F$ and let $p^m$
be its degree. By assumption $m>0$.  Let $E$ be the subfield of
$F$ obtained by adjoining $a$ and the coefficients of $f$ to
$\EF_{p^n}$. Then $f$ is irreducible and a factor of $q$ in
$E[X]$. By above, $\mathrm{Gal}(E[\alpha]/E)$ is an elementary
abelian group of order $p^m$. Since $E$ is a finite field,
$\mathrm{Gal}(E[\alpha]/E)$ is cyclic, so $m=1$.

%For instance, let $\EF_4=\EF_2[s]$, where $s^2+s+1=0$. Then
%$X^4-X-1\in \EF_4[X]$ factors as follows:
%$$
%X^4-X-1=(X^2+X+s)(X^2+X+s^2).
%$$

Assume henceforth that $q$ is actually irreducible. Then
$K=F[\alpha]$ is a splitting field for $q$ over $F$ and
$G=\mathrm{Gal}(K/F)$ is an elementary abelian $p$-group of order
$p^n$. More explicitly, for $b\in \EF_{p^n}$, let $\sigma_b\in G$
be defined by $\sigma_b(\alpha)=\alpha+b$. Then $b\mapsto
\sigma_b$ defines a group isomorphism $\EF_{p^n}^+\to G$. In
particular, $G$ has normal subgroups of all possible orders.

Suppose that $m$ satisfies $0\leq m\leq n$. Let $H$ be a subgroup
of $G$ of order $p^{m}$. Then the fixed field $E=K^H$ of $H$
satisfies $[K:E]=p^m$. Since $F[\alpha]=E[\alpha]$, the minimal
polynomial $\mu_{\alpha,E}$ of $\alpha$ over $E$ must have degree
$p^m$. In fact,
$$
\mu_{\alpha,E}(X)=\underset{\sigma\in H}\Pi(X-\alpha^\sigma).
$$
Since $\mu_{\alpha,E}(X)$ divides $q=X^{p^n}-X-a\in E[X]$, it
follows that all irreducible factors of $q$ in $E[X]$ have degree
$p^m$. In fact,
$$
X^{p^n}-X-a=\underset{\sigma\in\mathrm{Gal}(E/F)}\Pi\mu_{\alpha,E}(X)^{\sigma}.
$$
Let
\begin{equation}
\label{al3} \alpha_H=\underset{\sigma\in H}\Pi\alpha^\sigma.
\end{equation}
Since every $\sigma\in G$ is of the form $\sigma_b$ for $b\in
\EF_{p^n}$, it follows that $\alpha_H$ is a monic polynomial in
$\alpha$ of degree $p^m$ with coefficients in $\EF_{p^n}$. Since
$\alpha$ has degree $p^n$ over $F$, the degree of $\alpha_H$ over
$F$ is at least $p^{n-m}$. But clearly $\alpha_H\in E$, where
$[E:F]=p^{n-m}$. It follows that $E=F[\alpha_H]$.

As just noted, $\alpha_H$ as an $\EF_{p^n}$-linear combination of
powers of~$\alpha$. In fact, we may use the so-called Dickson
invariants, found by L. E. Dickson \cite{D} in 1911, to obtain a
sharper result. These invariants have been revisited numerous
times (see, for instance, \cite{H2} and \cite{St}).

Consider the polynomial $\Phi_m$ in the polynomial algebra
$F[A,B_1,\dots,B_m]$, defined as follows:
\begin{equation}
\label{al4} \C_m(A,B_1,\dots,B_m)=\underset{s_1, \dots,s_m\in
\EF_p}\Pi(A+s_1B_1+\cdots+s_mB_m).
\end{equation}
Clearly $\Phi_m$ is $\GL_m(\EF_p)$-invariant. Dickson showed that
$$
\Phi_m=A^{p^m}+f_{m-1}(B_1,\dots,B_m)A^{p^{m-1}}+\cdots+f_{1}(B_1,\dots,B_m)A^{p}+f_{0}(B_1,\dots,B_m)A,
$$
where $f_0,\dots,f_{m-1}\in \EF_p[B_1,\dots,B_m]$ are
algebraically independent and generate
$\EF_p[B_1,\dots,B_m]^{\GL_m(\EF_p)}$. Moreover, $\Phi_m$, and
hence $f_{m-1},\dots,f_0$, can be recursively computed from
\begin{equation}
\label{rec0} \Phi_0=A,
\end{equation}
\begin{equation}
\label{rec1}
\C_i=\C_{i-1}(A,B_1,\dots,B_{i-1})^p-\C_{i-1}(B_i,B_1,\dots,B_{i-1})^{p-1}\C_{i-1}(A,B_1,\dots,B_{i-1}).
\end{equation}

Let $\sigma_{b_1},\dots,\sigma_{b_m}$ be generators of $H$. This
means that $\sigma_{b_1},\dots,\sigma_{b_m}$ are in $H$ and that
$b_1,\dots,b_m$ are linearly independent over $\EF_p$. It follows
from (\ref{al3}) that
$$
\alpha_H=\underset{s_1,\dots,s_m\in
\EF_p}\Pi(\alpha+s_1b_1+\cdots+s_m
b_m)=\Phi_m(\alpha,b_1,\dots,b_m).
$$
This, together with (\ref{rec0}) and (\ref{rec1}) allows us to
recursively find $c_0,\dots,c_{m-1}\in \EF_{p^n}$ such that
$$
\alpha_H=\alpha^{p^m}+c_{m-1}\alpha^{p^{m-1}}+\cdots+c_{1}\alpha^{p}+c_0\alpha.
$$

In certain special cases we actually  have $c_0,\dots,c_{m-1}\in
\EF_p$, in which case we will say that $H$ has property $P$.

Let $R$ be the subgroup of $\EF_{p^n}^+$ that corresponds to $H$
under $\EF_{p^n}^+\to G$. Thus, $R$ is an $\EF_p$-subspace of
$\EF_{p^n}$, namely the $\EF_p$-span of $b_1,\dots,b_m$.

\begin{lemma} $H$ has property $P$ if and only $R$ is invariant under the Frobenius
automorphism of $\EF_{p^n}$, in which case
$$
\alpha_H=f_R(\alpha),
$$
where
$$
 f_R(Y)=\underset{b\in R}\Pi(Y+b).
$$
\end{lemma}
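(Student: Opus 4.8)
The plan is to recognize $\alpha_H$ as the evaluation at $\alpha$ of one explicit polynomial and then to reduce property $P$ to a purely polynomial statement over $\EF_{p^n}$. First I would observe that, by the definition (\ref{al4}) of $\Phi_m$ together with the fact that $\{s_1b_1+\cdots+s_mb_m:s_i\in\EF_p\}$ runs bijectively over $R$ (the $b_i$ being $\EF_p$-independent), one has the polynomial identity
$$
\Phi_m(Y,b_1,\dots,b_m)=\underset{b\in R}\Pi(Y+b)=f_R(Y)
$$
in $\EF_{p^n}[Y]$. Grouping the product in (\ref{al3}) according to $\sigma=\sigma_b$, $b\in R$, and evaluating at $Y=\alpha$ gives $\alpha_H=f_R(\alpha)$ with no hypothesis at all; so that formula is automatic. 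More importantly, Dickson's expansion identifies the coefficients: writing $f_R(Y)=Y^{p^m}+\gamma_{m-1}Y^{p^{m-1}}+\cdots+\gamma_0Y$, the identity above shows $\gamma_i=f_i(b_1,\dots,b_m)=c_i$. Hence property $P$ is equivalent to $f_R\in\EF_p[Y]$, and the entire lemma reduces to proving that $f_R\in\EF_p[Y]$ if and only if $R$ is Frobenius-invariant.

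For that equivalence I would work inside $\EF_{p^n}[Y]$. Let $\phi$ be the Frobenius automorphism $x\mapsto x^p$ of $\EF_{p^n}$, acting on $\EF_{p^n}[Y]$ by raising each coefficient to the $p$th power while fixing $Y$. Since $\EF_p$ is exactly the fixed field of $\phi$, we have $f_R\in\EF_p[Y]$ if and only if $\phi(f_R)=f_R$. The key step is to show $\phi(f_R)=f_{\phi(R)}$, where $\phi(R)=\{b^p:b\in R\}$. Indeed $f_R$ is monic of degree $p^m$ with the $p^m$ distinct roots $R$; if $f_R(c)=0$, raising $\sum_i\gamma_i c^{p^i}=0$ to the $p$th power yields $\sum_i\gamma_i^p(c^p)^{p^i}=0$, i.e.\ $\phi(f_R)(c^p)=0$. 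Thus $\phi$ carries $R$ injectively into the root set of the degree-$p^m$ polynomial $\phi(f_R)$, forcing that root set to be exactly $\phi(R)$; as $\phi$ is an additive bijection, $\phi(R)$ is again an $\EF_p$-subspace, so $\phi(f_R)=\underset{b\in\phi(R)}\Pi(Y+b)=f_{\phi(R)}$.

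Finally, a monic polynomial is determined by its roots, so $f_{\phi(R)}=f_R$ if and only if $\phi(R)=R$. Combining the steps, $f_R\in\EF_p[Y]\iff\phi(f_R)=f_R\iff\phi(R)=R$; that is, property $P$ holds precisely when $R$ is Frobenius-invariant, and in that (indeed in every) case $\alpha_H=f_R(\alpha)$ by the first paragraph. I expect the only genuine obstacle to be the middle identity $\phi(f_R)=f_{\phi(R)}$, specifically the need to know that $f_R$ is an additive ($p$-)polynomial with $\EF_p$-subspace root set and that such a polynomial is pinned down by its roots; but the additivity is already encoded in Dickson's expansion of $\Phi_m$, and the rest is standard linearized-polynomial theory, so this step should go through cleanly.
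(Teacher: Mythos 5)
Your proposal is correct and follows essentially the same route as the paper: both arguments reduce property $P$ to the statement $f_R\in\EF_p[Y]$ and then show this is equivalent to Frobenius-invariance of $R$ via the identity $f_R^\tau=f_{R^\tau}$. The only cosmetic differences are that you get the reduction directly from $f_R(Y)=\Phi_m(Y,b_1,\dots,b_m)$ (whereas the paper's forward implication shows the additive polynomial with coefficients $c_i$ vanishes on $R$ and hence equals $f_R$ by a degree count), and you establish $f_R^\tau=f_{R^\tau}$ by a root-counting argument rather than by the paper's one-line device of applying the Frobenius coefficientwise to the product of linear factors.
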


\begin{proof} We start by showing that $H$ has property $P$ if and only if
$f_R\in \EF_{p^n}[Y]$ has coefficients in $\EF_p$.

Suppose first that there exist $c_{m-1},\dots,c_0\in \EF_p$ such
that
$$
\alpha_H=\alpha^{p^m}+c_{m-1}\alpha^{p^{m-1}}+\cdots+c_1\alpha^p+c_0\alpha.
$$
Set
\begin{equation}
\label{efe} f(Y)=Y^{p^m}+c_{m-1}Y^{p^{m-1}}+\cdots+c_1 Y^p+c_0Y\in
\EF_p[Y].
\end{equation}
Let $b\in R$. Since $\alpha_H^{\sigma_b}=\alpha_H$, it follows
that
$$
f(b)=0.
$$
Therefore $f_R=f\in \EF_p[Y]$. Conversely, if $f_R\in \EF_p[X]$
then
$$
\alpha_H=\underset{b\in R}\Pi(\alpha+b)=f_R(\alpha),
$$
which is an $\EF_p$-linear combination of
$\alpha^{p^m},\dots,\alpha^p,\alpha$ with first coefficient 1.

Let $\tau$ be the Frobenious automorphism of $\EF_{p^n}$. Then
$f_R\in \EF_p[Y]$ if and only if
$$
f_R(Y)=f_R(Y)^\tau=\underset{b\in R}\Pi(Y+b^\tau )=\underset{b\in
R^\tau}\Pi (Y+b)=f_{R^\tau}(Y),
$$
which is equivalent to $R=R^\tau$.
\end{proof}

Suppose that $R$ is actually a subfield of $\EF_{p^n}$. Then $R$
is certainly invariant under $b\mapsto b^p$. Moreover,
$f_R(Y)=Y^{p^m}-Y$. Therefore, in this case,
$$
\alpha_H=\alpha^{p^m}-\alpha.
$$
In particular, $\alpha_G=a$.

\begin{cor}\label{indep2} If $b_1,\dots,b_m\in \EF_{p^m}$ are linearly independent over $\EF_p$ then
$$
f_0(b_1,\dots,b_m)=-1,\text{ whereas }f_j(b_1,\dots,b_m)=0,\text{
if }1\leq j\leq m-1.
$$
\end{cor}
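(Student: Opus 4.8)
The plan is to evaluate the defining product (\ref{al4}) at the given elements and compare it, coefficient by coefficient, against Dickson's expansion of $\C_m$. The first step is to note that $[\EF_{p^m}:\EF_p]=m$, so the hypothesis that $b_1,\dots,b_m\in \EF_{p^m}$ are linearly independent over $\EF_p$ means precisely that they form an $\EF_p$-basis of $\EF_{p^m}$. Consequently, as $(s_1,\dots,s_m)$ ranges over $\EF_p^m$, the element $s_1 b_1+\cdots+s_m b_m$ runs through each member of $\EF_{p^m}$ exactly once.

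Substituting $B_i=b_i$ into (\ref{al4}) then gives
$$
\C_m(A,b_1,\dots,b_m)=\underset{b\in \EF_{p^m}}\Pi(A+b).
$$
Since $b\mapsto -b$ permutes $\EF_{p^m}$, the right-hand side equals $\underset{b\in \EF_{p^m}}\Pi(A-b)$, which is the monic polynomial whose roots are exactly the elements of $\EF_{p^m}$; that is, it equals $A^{p^m}-A$.

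Finally, I would substitute $B_i=b_i$ into Dickson's expansion to obtain
$$
\C_m(A,b_1,\dots,b_m)=A^{p^m}+f_{m-1}(b_1,\dots,b_m)A^{p^{m-1}}+\cdots+f_0(b_1,\dots,b_m)A,
$$
equate this with $A^{p^m}-A$, and read off the coefficients of the powers $A^{p^j}$: this forces $f_0(b_1,\dots,b_m)=-1$ and $f_j(b_1,\dots,b_m)=0$ for $1\leq j\leq m-1$, as claimed. There is essentially no obstacle, since the only input beyond the definitions is the standard identity $\underset{b\in \EF_{p^m}}\Pi(A-b)=A^{p^m}-A$; the sole point requiring care is the recognition that linear independence of the $b_i$ makes their $\EF_p$-span the full subfield $\EF_{p^m}$, which is exactly what collapses the product over $\EF_p^m$ into the product over all of $\EF_{p^m}$.
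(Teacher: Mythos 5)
Your proof is correct and complete, but it follows a genuinely different route from the paper's. The paper obtains Corollary \ref{indep2} as a consequence of its Galois-theoretic discussion: with $q=X^{p^n}-X-a$ irreducible, $K=F[\alpha]$, and $H$ the subgroup of $G$ corresponding to the $\EF_p$-span $R$ of $b_1,\dots,b_m$, it has already established $\alpha_H=\Phi_m(\alpha,b_1,\dots,b_m)=\alpha^{p^m}+f_{m-1}(b_1,\dots,b_m)\alpha^{p^{m-1}}+\cdots+f_0(b_1,\dots,b_m)\alpha$, and the preceding lemma shows that when $R$ is a subfield one has $\alpha_H=f_R(\alpha)$ with $f_R(Y)=\underset{b\in R}\Pi(Y+b)=Y^{p^m}-Y$; taking $R=\EF_{p^m}$ and comparing the two expressions for $\alpha_H$ (legitimate because the powers $\alpha^{p^j}$, $0\leq j\leq m-1$, lie in the $F$-basis $1,\alpha,\dots,\alpha^{p^n-1}$) yields the stated values of the $f_j$. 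You instead specialize the defining product (\ref{al4}) directly: linear independence makes $b_1,\dots,b_m$ an $\EF_p$-basis of $\EF_{p^m}$, so the product collapses to $\underset{b\in \EF_{p^m}}\Pi(A+b)=A^{p^m}-A$ in $\EF_{p^m}[A]$, and equating coefficients of the distinct monomials $A^{p^j}$ in Dickson's expansion finishes the proof. Both arguments pivot on the same key identity $\underset{b\in \EF_{p^m}}\Pi(Y-b)=Y^{p^m}-Y$, but yours is more elementary and self-contained: it is a pure polynomial computation via the substitution homomorphism $\EF_p[A,B_1,\dots,B_m]\to \EF_{p^m}[A]$, needing no irreducible generalized Artin-Schreier polynomial (whose existence over a suitable $F\supseteq \EF_{p^m}$ the paper's reading implicitly presupposes), no extension $K/F$, and no degree argument for the independence of the powers of $\alpha$. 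What the paper's phrasing buys is continuity with \S\ref{alfaH}: there the corollary appears as exactly the subfield case of the property-$P$ analysis of the primitive elements $\alpha_H$, rather than as a standalone fact about the Dickson invariants.
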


Corollary \ref{indep2} is not true, in general, if
$b_1,\dots,b_m\in R$ are linearly dependent over~$\EF_p$, as the
case $m=2$ will confirm by taking $b_1=1=b_2$ for $j=0,1$.

\begin{exa}{\rm Here we furnish examples of subspaces $R$ of $\EF_{p^n}$
that are invariant under $b\mapsto b^p$ but are not subfields of
$\EF_{p^n}$, even when $m|n$.

Suppose first $m=1$, where $(p-1)|n$ and $p$ is odd. Take $c\in
\EF_p$, $c\neq 0,1$, and set
$$
f(Y)=Y^p-cY\in \EF_p[Y].
$$
Since $Y^{p^{p-1}}\equiv Y\mod f$, it follows that $f$ splits in
$\EF_{p^{p-1}}$ and hence in $\EF_{p^n}$. The roots of $f$ in
$\EF_{p^n}$ form a $1$-dimensional, Frobenius-invariant, subspace
$R$ of $\EF_{p^n}$, which is not a field, since $\EF_p$ is the
only $1$-dimensional subfield of $\EF_{p^n}$ and 0 is the only
element of $\EF_p$ that is a root of $f$.

Suppose next that $m=2$, where $2p|n$ and $p$ is odd. Let
\begin{equation}
\label{terra} f(Y)=\underset{j\in \EF_p}\Pi(Y^p-Y-j).
\end{equation}
Thus, $f$ is the product of all Artin-Schrier polynomials in
$\EF_p[Y]$. This readily implies that the roots of $f$ form a
2-dimensional, Frobenius-invariant, subspace $R$ of $\EF_{p^n}$
containing $\EF_p$. In particular, $f(cY)=f(Y)$ for all $0\neq
c\in \EF_p$. Since $Y|f(Y)$ and $f(Y)$ has degree $p^2$, it
follows that
$$
f(Y)=Y^{p^2}+a Y^{p}+b Y,
$$
where $a,b\in \EF_p$. Using (\ref{terra}) and $p>1$ to compute
$a,b$ reveals that
$$
f(Y)=Y^{p^2}-Y^{p}-Y.
$$
However, the subfield of $\EF_{p^n}$ obtained by adjoining $R$ to
$\EF_p$ is $\EF_{p^p}$. Since $p>2$, it follows that $R$ is not a
subfield of $\EF_{p^n}$. }
\end{exa}

\section{Existence of irreducible generalized Artin-Schreier
polynomials} \label{hector}

We begin this section by giving an elementary example of an
irreducible Artin-Schreier polynomial. We then furnish
substantially more general examples, which require the use of
preliminary results from \cite{G} and \cite{MS}. In fact, we give
necessary and sufficient conditions for a polynomial
$$q(X)=X^{p^{n+e}}-X^{p^e}-g(Z^r)\in F[X]$$
to be irreducible, where $X$ and $Z$ are algebraically independent
elements over an arbitrary field $K$ of prime characteristic $p$,
$n>0$, $r>0$, $e\geq 0$, $F=K(Z)$, and $g(Z)\in K[Z]$ is non-zero of degree
relatively prime to $p$.

Recall that an element $\pi$ of and  an integral domain $D$ is
irreducible if $d$ is neither~0 nor a unit and whenever $\pi=ab$
with $a,b\in D$ then $a$ or $b$ is a unit.

It is easy to see that $X^{p^n}-X-Z$ is irreducible in $K[X][Z]$,
hence in $K[X,Z]$ and therefore in $K[Z][X]$. It follows from
Gauss' Lemma (see \cite{J}, \S 2.16) that $X^{p^n}-X-Z$ is
irreducible in $F[X]$.

\begin{theorem}\label{nhg1} {\rm (see \cite{MS}, Proposition 1.8.9)}. Let $D$ be an integral domain. Let $X, Z$ be algebraically independent elements over
$D$. Let $f\in D[X]$ and $g\in D[Z]$. If $\gcd(deg(f),
\deg(g))=1$, then $h(X, Z)= f(X)-g(Z)$ is irreducible in  $D[X,
Z]$.
\end{theorem}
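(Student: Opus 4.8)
The plan is to reduce the question to irreducibility over a rational function field, where a single degree computation lets the coprimality hypothesis do all the work, and then to descend back to $D[X,Z]$ by Gauss's Lemma. Throughout put $F=\mathrm{Frac}(D)$, $n=\deg f$ and $m=\deg g$; the substantive case is $n,m\ge 1$ (if one degree is $0$ the coprimality forces the other to be $1$, so $h$ is linear in one variable and irreducible once its content is a unit).

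The heart of the matter is to show that $h(X,Z)=f(X)-g(Z)$ is irreducible as a polynomial in $X$ over the field $F(Z)$. I would pick a root $\alpha$ of $h$ in an algebraic closure of $F(Z)$, so that $f(\alpha)=g(Z)=:w$; since $w=g(Z)$ is transcendental over $F$, so is $\alpha$, and both $F(Z)$ and $F(\alpha)$ are rational function fields over $F$. Set $M=F(Z,\alpha)$. The element $w$ lies in $F(Z)$ and in $F(\alpha)$, and the classical degree formula $[F(t):F(s(t))]=\deg s$ for a nonconstant $s$ gives $[F(Z):F(w)]=m$ and $[F(\alpha):F(w)]=n$. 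On the other hand $\alpha$ is a root of the degree-$n$ polynomial $h$ over $F(Z)$, so $[M:F(Z)]\le n$, while $Z$ is a root of the degree-$m$ polynomial $g(T)-f(\alpha)$ over $F(\alpha)$, so $[M:F(\alpha)]\le m$. Multiplicativity of degrees in the towers $F(w)\subseteq F(Z)\subseteq M$ and $F(w)\subseteq F(\alpha)\subseteq M$ then yields
\[
[M:F(w)]=m\,[M:F(Z)]=n\,[M:F(\alpha)].
\]
Hence $m$ and $n$ both divide $[M:F(w)]$, and since $\gcd(m,n)=1$ we get $mn\mid[M:F(w)]$; but $[M:F(w)]=m\,[M:F(Z)]\le mn$. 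Therefore $[M:F(w)]=mn$ and $[M:F(Z)]=n$, so the minimal polynomial of $\alpha$ over $F(Z)$ already has degree $n=\deg_X h$. Thus $h$ is, up to a unit of $F(Z)$, that minimal polynomial, and is irreducible in $F(Z)[X]$.

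Next I would descend to $F[X,Z]$. Viewing $h$ in $F[Z][X]$, its leading coefficient in $X$ is the leading coefficient of $f$, a nonzero element of $F$ and hence a unit of the UFD $F[Z]$; so $h$ is primitive over $F[Z]$. Being primitive over $F[Z]$ and irreducible over $F(Z)$, Gauss's Lemma makes $h$ irreducible in $F[Z][X]=F[X,Z]$. Finally, any factorization $h=h_1h_2$ in $D[X,Z]\subseteq F[X,Z]$ must have one factor, say $h_2$, a unit of $F[X,Z]$, i.e.\ a nonzero constant $c\in D$; then $c$ divides every coefficient of $h$, in particular the leading coefficient of $f$.

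I expect the genuine obstacle to sit precisely at this last descent. The field-degree computation is robust, but the passage from $F[X,Z]$ to $D[X,Z]$ is a content question: if the coefficients of $f$ and $g$ share a non-unit common factor in $D$, then $h$ can be reducible (for instance $f=2X$, $g=2Z^2$ over $D=\Z$ give $h=2(X-Z^2)$), so the statement really needs that this content be a unit — which is automatic when, say, $f$ is monic, as it is in every application of the theorem made in this paper. Under that mild normalization the constant $c$ above is forced to be a unit of $D$, and irreducibility in $D[X,Z]$ follows. The two points demanding care in a full write-up are the classical formula $[F(t):F(s(t))]=\deg s$ (which itself rests on an irreducibility argument for $s(T)-s(t)$ via Gauss's Lemma) and the bookkeeping that upgrades the two divisibilities $m\mid[M:F(w)]$ and $n\mid[M:F(w)]$ into the exact value $[M:F(w)]=mn$.
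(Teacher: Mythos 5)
Your argument is correct, and there is in fact no internal proof to compare it against: the paper quotes this statement from \cite{MS}, Proposition 1.8.9, and uses it as a black box, so you have supplied what the paper omits. Your route is the classical tower-of-fields argument (in the style of Ehrenfeucht/Tverberg): the two degree formulas $[F(Z):F(w)]=m$ and $[F(\alpha):F(w)]=n$, the bounds $[M:F(Z)]\le n$ and $[M:F(\alpha)]\le m$, and $\gcd(m,n)=1$ do force $[M:F(w)]=mn$ and hence $[M:F(Z)]=n$, so $h$ is (a unit multiple of) the minimal polynomial of $\alpha$ over $F(Z)$ and is irreducible there; the descent to $F[X,Z]$ via primitivity over $F[Z]$ and Gauss's Lemma is routine, and the argument is insensitive to characteristic and to inseparability of $f$, which matters since the paper applies it to $f(X)=X^{p^{n+e}}-X^{p^e}$. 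Your closing caveat is a genuine catch rather than a quibble: as literally stated for an arbitrary integral domain the proposition is false, since $f=2X$, $g=2Z^2$ over $D=\mathbb{Z}$ satisfy $\gcd(\deg f,\deg g)=1$ while $h=2(X-Z^2)$ is reducible in $\mathbb{Z}[X,Z]$, so the final descent from $F[X,Z]$ to $D[X,Z]$ does require the content of $h$ to be a unit (automatic when $f$ is monic or $D$ is a field). This does not affect the paper: its only invocation of the result, in Case (i) of the theorem of \S\ref{hector}, takes $D=K$ a field (with monic $f$), where your argument through $F[X,Z]$ already yields the full conclusion with no extra hypothesis.
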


\begin{theorem}\label{nhg2}  {\rm (see \cite{G}, Theorem 1.1)}.
Let $D$ be a unique factorization domain. Let~$t$ be any positive
integer and let~$f$ be an irreducible polynomial in $D[Z]$ of
positive degree $m$, leading coefficient $a$ and nonzero constant
term $b$. Suppose that for each prime $p$ dividing $t$ and any
unit $u$ of $D$ at least one of the two following statements is
true:

\noindent {\rm (A)} \,$ua\notin D^p$;

\noindent {\rm (B)} \,{\rm (i)} \,$(-1)^mub\notin D^p$ \,and\,
{\rm (ii)} \,$ub\notin D^2$, if $4|t$.

%\vspace{0.1cm}
 \noindent {\it Then
 $$\text{$f(Z^t)$ is irreducible in
 $D[Z]$.}$$
}
\end{theorem}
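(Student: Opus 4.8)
The plan is to pass to a field and then invoke the classical irreducibility criterion for pure binomials $Z^t-c$. Since $f$ is irreducible of positive degree over the unique factorization domain $D$, it is primitive, and the coefficients of $f(Z^t)$ are exactly those of $f$ (merely spread out), so $f(Z^t)$ is primitive as well. By Gauss' Lemma it therefore suffices to prove that $f(Z^t)$ is irreducible over the fraction field $L=\mathrm{Frac}(D)$. Over $L$ I may replace $f$ by its monic associate $a^{-1}f$ without affecting irreducibility, fix a root $\theta$ of $f$ in an algebraic closure, and set $M=L(\theta)$, a field of degree $m$ over $L$. If $\eta$ is a root of $f(Z^t)$ then $\eta^t$ is a root of $f$, conjugate to $\theta$, so I may take $\eta^t=\theta$; since $[L(\eta):L]=[L(\eta):M]\,[M:L]=[L(\eta):M]\,m$ and $[L(\eta):M]$ divides $t$, Capelli's theorem yields that $f(Z^t)$ is irreducible over $L$ if and only if $Z^t-\theta$ is irreducible over $M$.

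Next I would apply the Vahlen--Capelli criterion for binomials: $Z^t-\theta$ is irreducible over $M$ if and only if $\theta\notin M^p$ for every prime $p\mid t$ and, in addition, $\theta\notin -4M^4$ whenever $4\mid t$ (in characteristic $2$ this last clause is vacuous). One may first reduce $t$ to a prime power by an induction on $t$, peeling off one prime factor at a time. Thus everything comes down to deducing these field-theoretic non-power statements from the arithmetic hypotheses (A) and (B) on the coefficients $a$ and $b$.

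For the translation I would read the hypotheses $\pi$-adically over $D$. The clause ``$(-1)^m u b\notin D^p$ for every unit $u$'' says exactly that $v_\pi(b)\not\equiv 0\pmod p$ for some prime $\pi$ of $D$, and (A) likewise says $v_\pi(a)\not\equiv 0\pmod p$ for some $\pi$; the universal quantifier over units is precisely what upgrades ``$p$-th power'' to ``$p$-th power up to a unit.'' Passing to the reciprocal polynomial $f^*(Z)=Z^m f(1/Z)$, whose root is $1/\theta$ and whose leading and constant coefficients are $b$ and $a$, interchanges the roles of $a$ and $b$ and shows that (A) for $f$ is the constant-term condition for $f^*$. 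Since $f(Z^t)$ and $f^*(Z^t)$ are irreducible simultaneously, it is enough to treat the constant-term condition (B)(i), while the exceptional clause (B)(ii), $ub\notin D^2$ when $4\mid t$, is tailored to rule out $\theta\in -4M^4$.

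The main obstacle is this last step, because the naive norm computation $N_{M/L}(\theta)=(-1)^m b/a$ is not enough: when a prime $\pi$ divides both $a$ and $b$ the quotient $b/a$ can be a $p$-th power up to units even though neither $a$ nor $b$ is (for instance when $a$ and $b$ are associates), so the norm detects nothing. The genuine input must be the individual $\pi$-adic valuations of $\theta$, read off from the Newton polygon of $f$ at $\pi$: conditions (A)/(B) force a segment, hence a place $w$ of $M$ lying over $\pi$, at which $v_w(\theta)\not\equiv 0\pmod p$, and a $p$-th power has valuation divisible by $p$ at every place, so $\theta\notin M^p$. Making this valuation analysis precise — controlling how $\pi$ splits and ramifies in $M$ and ensuring the non-$p$-divisible valuation survives to $M$ rather than merely to the completion — together with the separate and genuinely special $4\mid t$ computation, is where the real work lies.
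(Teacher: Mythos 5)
First, a point of comparison you could not have known: the paper does not prove this statement at all --- it is imported verbatim from \cite{G}, Theorem 1.1, as a tool for the irreducibility theorem of \S\ref{hector} --- so there is no internal proof to measure you against, and your attempt must be judged on its own. Your reduction scaffolding is correct and standard: irreducible of positive degree over a UFD implies primitive, $f(Z^t)$ inherits primitivity, Gauss' Lemma passes to $L=\mathrm{Frac}(D)$, Capelli's lemma converts irreducibility of $f(Z^t)$ over $L$ into irreducibility of $Z^t-\theta$ over $M=L(\theta)$, and the Vahlen--Capelli criterion converts that into ``$\theta\notin M^p$ for all primes $p\mid t$, and $\theta\notin -4M^4$ if $4\mid t$.'' But everything after these (routine) reductions --- deducing the non-power conditions on $\theta$ from hypotheses (A)/(B) --- is the entire content of the theorem, and you explicitly defer it (``where the real work lies''). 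As it stands this is a plan, not a proof.

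The plan itself also has concrete defects. (a) You misparse the quantifier: the hypothesis couples (A) and (B) through the \emph{same} unit $u$ (for each $u$, (A)$_u$ or (B)$_u$), which is strictly weaker than your decoupled reading ``$v_\pi(a)\not\equiv 0 \pmod p$ for some $\pi$, or $v_\pi(b)\not\equiv 0\pmod p$ for some $\pi$.'' For instance $D=\mathbb{Z}$, $t=p=2$, $a=4$, $b=-9$, $m$ even satisfies the hypothesis ($u=1$: $-9\notin\mathbb{Z}^2$; $u=-1$: $-4\notin\mathbb{Z}^2$) while both of your decoupled conditions fail; the correct contradiction target is a single unit $u$ with $ua\in D^p$ and $(-1)^mub\in D^p$ simultaneously. (b) Your proposed engine --- a Newton polygon segment yielding a place $w$ of $M$ with $p\nmid v_w(\theta)$ --- provably cannot cover all admissible cases: take $D=\mathbb{Z}$, $t=p=2$, $f=Z^3+Z+9$, which is irreducible and satisfies the hypothesis, yet every $v_\pi(a)$ and $v_\pi(b)$ is even and $v_w(\theta)$ is even at every finite place (at $\pi=3$ the polygon has slopes $-2$ and $0$). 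What excludes $\theta\in M^2$ there is exactly the norm computation $N_{M/L}(\theta)=(-1)^mb/a=-9\notin\mathbb{Q}^2$ that you set aside as ``not enough''; combined with UFD unit bookkeeping (a unit of $D$ lying in $L^p$ is the $p$-th power of a unit, so the units with $ua\in D^p$ and those with $(-1)^mub\in D^p$ form the same coset of $U^p$ when $\theta\in M^p$ and some $ua\in D^p$), the norm argument is the workhorse in these cases, while valuation-theoretic input is needed only in the complementary case where (A) holds for every $u$ --- and you develop neither branch. (c) The reciprocal-polynomial symmetry does not reduce (A) to the constant-term condition: the hypothesis is asymmetric in $a$ and $b$, since clause (B)(ii), $ub\notin D^2$ when $4\mid t$, attaches only to $b$ and (A) has no companion clause; so $f\mapsto Z^mf(1/Z)$ does not preserve the hypothesis, and you would still owe a separate argument that (A) alone rules out $\theta\in-4M^4$ when $4\mid t$.
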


\begin{theorem}\label{nhg3} {\rm (see \cite{G}, Corollary 4.6 (b))}. Let $D$ be a unique factorization domain of prime characteristic $p$. Let $f(Z)\in D[Z]$ be an arbitrary
polynomial of positive degree that is  irreducible in $D[Z]$, and
let $s$ be any positive integer.  Then
$$\text{$f\bigl(Z^{p^s}\bigr)$ \!is reducible \!in
$D[Z]\!\!\!\iff$\!\!\! there exists a unit $u$ of $D$ such that
$uf(Z)\!\in \!D^p[Z]$.}$$
\end{theorem}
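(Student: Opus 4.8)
The plan is to separate the biconditional into its easy and hard halves, passing to the fraction field $F=\mathrm{Frac}(D)$ for the hard one. For the easy direction, suppose $uf\in D^p[Z]$ for a unit $u$, say $uf(Z)=\sum_i c_i^p Z^i$ with $c_i\in D$. Since the Frobenius is additive on $D[Z]$, I would compute
$$
uf(Z^{p^s})=\sum_i c_i^p Z^{ip^s}=\Big(\sum_i c_i Z^{ip^{s-1}}\Big)^p=:h(Z)^p,
$$
where $\deg h=p^{s-1}\deg f\geq 1$. Thus $f(Z^{p^s})=u^{-1}h(Z)^p$ exhibits a nontrivial factorization (into $u^{-1}h$ and $h^{p-1}$), so $f(Z^{p^s})$ is reducible.

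For the converse the first step is to move to a field. Since $f$ has positive degree and is irreducible in $D[Z]$ it is primitive, and so is $f(Z^{p^s})$, as it has the same coefficients; by Gauss's Lemma $f(Z^{p^s})$ is reducible in $D[Z]$ if and only if it is reducible in $F[Z]$. I would then prove the field-theoretic heart of the matter: for $f$ irreducible in $F[Z]$, the polynomial $f(Z^{p^s})$ is reducible exactly when every coefficient of $f$ lies in $F^p$. Fixing a root $\alpha$ of $f$ and writing $K=F(\alpha)$, a purely inseparable degree count gives $[K(\alpha^{1/p^s}):K]=p^{\,s-i}$, where $i$ is the largest exponent with $\alpha^{1/p^i}\in K$; hence $f(Z^{p^s})$ is irreducible iff this degree is $p^s$ iff $\alpha\notin K^p$, \emph{independently of} $s$. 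It then remains to match $\alpha\in K^p$ with $f\in F^p[Z]$. If $\beta=\alpha^{1/p}\in K$ then $K=F(\beta)$, so the minimal polynomial $g$ of $\beta$ over $F$ has degree $\deg f$; raising the coefficients of $g$ to the $p$-th power produces a monic polynomial $g^{[p]}$ of the same degree which vanishes at $\beta^p=\alpha$, forcing $f=g^{[p]}\in F^p[Z]$. The reverse implication is just the identity $f(Z^p)=g(Z)^p$ read backwards.

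The last step is to descend from $F^p[Z]$ to $D$ and produce the unit $u$. Writing $g\in F[Z]$ as $g=\lambda\tilde g$ with $\lambda\in F^\times$ and $\tilde g\in D[Z]$ primitive, the identity $f(Z^p)=g(Z)^p=\lambda^p\tilde g(Z)^p$ presents the primitive polynomial $f(Z^p)$ as an $F^\times$-multiple of the primitive polynomial $\tilde g(Z)^p$; comparing contents forces $\lambda^p=:u$ to be a unit of $D$. Matching coefficients in $f(Z^p)=u\,\tilde g(Z)^p$ then gives $u^{-1}f\in D^p[Z]$, which is the assertion with unit $u^{-1}$.

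I expect the main obstacle to be the converse direction, and within it two points demand care. The first is the purely inseparable degree computation, since it is what makes the criterion uniform in $s$ and thereby genuinely reduces the problem to $s=1$. The second is the descent: the naive unique-factorization argument only yields each coefficient of $f$ as a unit times a $p$-th power, with \emph{a priori} different units, and the real work is to use the single auxiliary polynomial $\tilde g$ together with a content comparison to secure one common unit $u$ for all coefficients at once.
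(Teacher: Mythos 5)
You should know at the outset that the paper contains no proof of this statement: it is imported verbatim from the reference [G, Corollary 4.6(b)], so there is no internal argument to compare yours against, and your proposal has to stand on its own. It essentially does. The easy direction via Frobenius, the reduction through primitivity and Gauss's Lemma to the fraction field $F$, the purely inseparable degree count showing that irreducibility of $f(Z^{p^s})$ over $F$ is equivalent to $\alpha\notin K^p$ uniformly in $s\geq 1$, and the final content comparison manufacturing a single unit are all correct and correctly ordered; your closing remark about needing one common unit rather than coefficientwise units also identifies the right pressure point.

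There is, however, one intermediate claim that is false as you state it: ``for $f$ irreducible in $F[Z]$, $f(Z^{p^s})$ is reducible exactly when every coefficient of $f$ lies in $F^p$'' holds only for \emph{monic} $f$. Take $F=\mathbb{F}_p(u,t)$ and $f(Z)=u(Z-t^p)$: this is irreducible, $f(Z^p)=u(Z-t)^p$ is reducible, yet $u\notin F^p$. The correct field-level criterion is that some $F^\times$-multiple of $f$ (equivalently, its monic associate) lies in $F^p[Z]$ --- and this scalar ambiguity is exactly what the unit $u$ in the theorem's conclusion exists to absorb, as the same example with $D=\mathbb{F}_p(u)[t]$ shows: there $u$ is a unit of $D$, $f$ is primitive and irreducible in $D[Z]$, and $u^{-1}f=Z-t^p\in D^p[Z]$. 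The slip propagates into your derivation ``forcing $f=g^{[p]}$'': since your $f$ is only primitive in $D[Z]$, not monic, the minimal-polynomial comparison gives $f=\mathrm{lc}(f)\,g^{[p]}$, hence $f(Z^p)=\mathrm{lc}(f)\,g(Z)^p=\mathrm{lc}(f)\lambda^p\,\tilde g(Z)^p$, and your content comparison then forces $\mathrm{lc}(f)\lambda^p$ (rather than $\lambda^p$) to be the unit of $D$; the remainder of the descent goes through verbatim. So the architecture is sound and the repair is one line, but as written the ``field-theoretic heart'' is wrong without the monic normalization made explicit.
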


We can now prove the following result.

%\smallskip
\begin{theorem} Let $K$ be a field of prime characteristic  $p$. Let $X,Z$ be algebraically independent elements over
$K$ and set $F=K(Z)$. Let $n, e, r$ be integers such that $n>0,
r>0$ and $e\geq 0$. Let $g(Z)=c_d Z^d+\cdots+c_1 Z+c_0\in K[Z]$ be
any polynomial whose degree $d$ is coprime with $p$. Then
$h(X,Z)=X^{p^{n+e}}-X^{p^e}-g(Z^r)$ is irreducible in $F[X]$ if
and only if at least one of the following conditions is satisfied:
\begin{center}{\rm (i)} $p\nmid r$, \,\,{\rm (ii)} $e =0$, \,\,{\rm (iii)} $g(Z)\notin K^p[Z]$.\end{center}
\end{theorem}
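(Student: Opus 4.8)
The plan is to pass from the field $F=K(Z)$ to the polynomial ring $K[X,Z]$ and to reorganize $h$ as the substitution of a power of $Z$ into a single polynomial over $D=K[X]$, so that the three substitution theorems (Theorem \ref{nhg1}, Theorem \ref{nhg2}, Theorem \ref{nhg3}) can be applied in turn. Since $h$ is monic in $X$, it is primitive as an element of $K[Z][X]$, so by Gauss' Lemma $h$ is irreducible in $F[X]$ if and only if it is irreducible in $K[X,Z]$, and the same equivalence holds for reducibility. Writing $G_0(Z)=g(Z)-X^{p^{n+e}}+X^{p^e}\in D[Z]$, we have $h=-G_0(Z^r)$, so it suffices to decide when $G_0(Z^r)$ is irreducible in $D[Z]=K[X,Z]$. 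I would first record that $G_0$ itself is irreducible there: viewed over the base field $K$ it equals $-\bigl((X^{p^{n+e}}-X^{p^e})-g(Z)\bigr)$, and since $p\nmid d$ we have $\gcd(p^{n+e},d)=1$, so Theorem \ref{nhg1} applies.

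Next I would factor $r=p^s m$ with $p\nmid m$, so that $G_0(Z^r)=H(Z^{p^s})$ where $H(Z)=G_0(Z^m)$, and treat the prime-to-$p$ part via Theorem \ref{nhg2} applied to $f=G_0$ over $D=K[X]$ with exponent $m$. Here the leading coefficient is $a=c_d\in K^\times$ and the constant term is $b=c_0-X^{p^{n+e}}+X^{p^e}$, a polynomial of degree $p^{n+e}$ in $X$. For any prime $\ell\mid m$ (necessarily $\ell\neq p$) and any unit $u\in K^\times$, condition (B)(i) holds because $(-1)^d u b$ has degree $p^{n+e}$, which is not divisible by $\ell$, hence is not an $\ell$-th power in $K[X]$; moreover when $4\mid m$ the prime $p$ is odd, so $p^{n+e}$ is odd and $ub$ cannot be a square, giving (B)(ii). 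Thus Theorem \ref{nhg2} shows $H=G_0(Z^m)$ is irreducible in $K[X,Z]$. In particular, if $p\nmid r$ then $s=0$ and $G_0(Z^r)=H$ is irreducible, which is exactly case (i).

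It remains to treat $H(Z^{p^s})$ when $s\ge 1$, i.e. $p\mid r$, using Theorem \ref{nhg3}: since $D=K[X]$ is a UFD of characteristic $p$ and $H$ is irreducible, $G_0(Z^r)=H(Z^{p^s})$ is reducible if and only if there is a unit $u\in K^\times$ with $u\,H(Z)\in D^p[Z]$. Here $D^p=K^p[X^p]$, so $uH(Z)=u\,g(Z^m)-uX^{p^{n+e}}+uX^{p^e}$ lies in $D^p[Z]$ precisely when every $X$-exponent occurring is a multiple of $p$ and every coefficient lies in $K^p$. The decisive observation is the term $uX^{p^e}$: if $e=0$ its exponent is $1$, which is not a multiple of $p$, so no unit $u$ can work and $G_0(Z^r)$ is irreducible, which is case (ii). If instead $e\ge 1$, all exponents $0,p^e,p^{n+e}$ are multiples of $p$, and membership in $D^p[Z]$ forces $u\in K^p$ (from the coefficients $\pm u$) and then $c_i=u^{-1}(uc_i)\in K^p$ for every $i$; such a $u$ exists iff $g(Z)\in K^p[Z]$. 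Hence for $p\mid r$ and $e\ge 1$, $G_0(Z^r)$ is irreducible exactly when $g\notin K^p[Z]$, which is case (iii).

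Collecting the cases yields the stated equivalence, the forward direction being the contrapositive: if all of (i)--(iii) fail, i.e. $p\mid r$, $e\ge 1$ and $g\in K^p[Z]$, then $u=1$ witnesses reducibility. I expect the main obstacle to be the bookkeeping needed to verify the hypotheses of Theorem \ref{nhg2} --- checking condition (B) for every prime $\ell\mid m$ and every unit, where the degree-$p^{n+e}$ constant term and the $4\mid t$ clause require the parity and divisibility arguments above --- together with the Frobenius description of $D^p[Z]$, which must be handled carefully so that it cleanly separates the roles of $e=0$, of $e\ge 1$ with $g\in K^p[Z]$, and of $g\notin K^p[Z]$.
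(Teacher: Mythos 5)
Your proof is correct, and it draws on the same toolbox as the paper --- Gauss' Lemma together with Theorems \ref{nhg1}, \ref{nhg2} and \ref{nhg3}, with the factorization $r=p^s m$, $p\nmid m$ --- but it redistributes the work among these tools in a genuinely different way. The paper argues by cases on which of (i)--(iii) holds: it gets case (i) directly from Theorem \ref{nhg1}, since $\gcd(p^{n+e},dr)=1$; it handles $e=0$ by applying Theorem \ref{nhg2} with $t=p^s$ at the prime $p$ itself, verifying condition (B) from the fact that the constant term $X^{p^n}-X-c_0$ has no repeated roots (a $p$th power, or a square of positive degree, would); and it invokes Theorem \ref{nhg3} only for case (iii), followed by an explicit coefficient comparison with the polynomial $Q_0$. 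You instead prove once and for all that $H=G_0(Z^m)$ is irreducible --- via Theorem \ref{nhg2} at the primes $\ell\mid m$, all distinct from $p$, where degree-divisibility arguments dispose of (B) --- and then extract the entire dichotomy for the $p$-power part from a single application of Theorem \ref{nhg3}, using the Frobenius description $D^p=K^p[X^p]$: the exponent $1$ on $X$ blocks membership of $uH$ in $D^p[Z]$ when $e=0$, the coefficients block it when $g\notin K^p[Z]$, and $u=1$ witnesses it otherwise. This buys a cleaner, unified if-and-only-if: your single analysis of $uH\in D^p[Z]$ replaces both the paper's separability trick in case (ii) and its leading-coefficient computation in case (iii), and it also subsumes the paper's separate opening observation that $h$ is a $p$th power when all three conditions fail. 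Two small remarks: your detour through Theorem \ref{nhg2} to establish irreducibility of $G_0(Z^m)$ is heavier than necessary, since Theorem \ref{nhg1} applies directly to $X^{p^{n+e}}-X^{p^e}$ and $g(Z^m)$ because $\gcd(p^{n+e},dm)=1$ (this is what the paper does with $r_0$ in place of your $m$); and it is worth noting that the hypothesis $\gcd(d,p)=1$ forces $d\geq 1$, so $G_0$ indeed has the positive $Z$-degree and nonzero constant term required by Theorems \ref{nhg2} and \ref{nhg3}.
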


\begin{proof} By unique factorization in $\mathbb Z$ there exist a
positive integer~$r_0$ coprime with~$p$ and a nonnegative integer
$s$ such that $r=r_0p^s$, so $p|r$ if and only if $s\ge 1$.

Suppose first none of the conditions (i)-(iii) is fulfilled. Thus
$e\ge 1$, $s\ge 1$ and $g(Z)\in K^p[Z]$. The last two conditions
imply $g(Z^r)=Q^p(Z)$ for some $Q(Z)$ in $K[Z]$, and now the first
condition implies that $h(X,Z)$ is a $p$th power in $F[X]$.

Suppose next that at least one of the conditions (i)-(iii) holds.
We wish to show that $h(X,Z)$ is irreducible in $F[X]$.

\smallskip
Case (i).  Suppose $p\nmid r$. Therefore $p\nmid dr$. Since
$g(Z^{r})$ has degree~ $dr$, from  the case $D=K$, $f(X)=
X^{p^{n+e}}-X^{p^e}$ of Theorem \ref{nhg1}, with $g(Z^{r})$
instead of $g(Z)$, we see that $h(X, Z)$ is irreducible in $K[X,
Z]$, hence in $K[Z][X]$, and therefore in $F[X]$ by Gauss' Lemma.

\smallskip
Case (ii). Suppose $e=0$. The previous case guarantees that
$X^{p^n}-X-g(Z^{r_0})$ is irreducible in $F[X]$, so we can suppose
$s\ge 1$. Let $D=K[X]$. Therefore $f(Z)=X^{p^n}-X-g(Z^{r_0})$ is
an irreducible polynomial in $D[Z]$ of degree $m=dr_0$ and
constant term $b=X^{p^n}-X-c_0$. Since $X^{p^n}-X-c_0$ has no
repeated roots, for any $u\in K^*$ (i.e., for any unit $u$ of $D$)
we have
 $(-1)^{m}ub\notin D^p$ as well as $ub\notin D^2$ if $4|p^s$ (i.e., if $p=2$ and $s\ge 2$).
Thus part (B) of  Theorem \ref{nhg2} is satisfied with $D=K[X]$
and $t=p^s$. We conclude that $f(Z^{p^s})$ is irreducible in
$D[Z]$, and therefore in $K[Z][X]$. Hence $h(X,Z)=f(Z^{p^s})$ is
irreducible in  $F[X]$ by Gauss' Lemma.

\smallskip
Case (iii). Assume $g(Z)\notin K^p[Z]$. From the cases (i) and
(ii) we can assume $s\ge 1$ and $e\ge 1$. Suppose, if possible,
that $h(X, Z)$ is reducible in $F[X]$. Letting $D=K[X]$ we get,
via Gauss' Lemma, that $h(X, Z)$ is reducible in $K[Z][X]$, and
therefore in $D[Z]$. Letting $f(Z)=X^{p^n}-X-g(Z^{r_0})$ we get
that $f(Z^{p^s})= h(X, Z)$ is reducible in $D[Z]$. But, as seen
above, $f(Z)=X^{p^n}-X-g(Z^{r_0})$ is irreducible in $D[Z]$ of
positive degree $m=dr_0$. Hence, by Theorem \ref{nhg3}, there
exists a unit $u$ of $D$ and $Q\in D^p[Z]$ such that $uf(Z)=Q(Z)$.
In other words, there exist $u\in K^*$ and $Q_0, Q_1, \dots ,
Q_m\in K[X]$ such that

\vspace{-0.3cm}
\begin{equation*}
u\biggl(X^{p^{n+e}} - X^{p^e} - \sum_{0\le k\le
d}c_kZ^{kr_0}\biggr) = \sum_{0\le k\le m} Q_k^p Z^k.
\end{equation*}
Equating coefficients of like monomials we obtain
$$\text{$u(X^{p^{n+e}} - X^{p^e}-c_0) = Q_0^p$ and $uc_k= Q_{kr_0}^p\in K^p$ for $k=1, \dots , d$.}$$
Since $Q_0$ has degree $m_0= p^{n+e-1}$, there must exist $d_0,
d_1, \dots , d_{m_0}$ in $K$ such that $Q_0=  \sum_{0\le k\le m_0}
d_kX^k$, whence
\begin{equation}
\label{nhg4}
u(X^{p^{n+e}} - X^{p^e}) -uc_0 = \sum_{0\le k\le m_0}d_k^pX^{kp}.
\end{equation}
Equating leading coefficients yields $u = d_{m_0}^p\in K^p$, so
$c_k\in K^p$ for $k=1, \dots , d$.  But
$$u(X^{p^{n+e}} - X^{p^e})= \biggl(d_{m_0}X^{p^{e-1}}(X-1)^{p^{n-1}}\biggr)^{\!\!p},$$
so (\ref{nhg4}) gives
$$uc_0= u(X^{p^{n+e}} - X^{p^e})- Q_0^p= \biggl(d_{m_0}X^{p^{e-1}}(X-1)^{p^{n-1}} -Q_0\biggr)^{\!\!p}\in K[X]^p\cap K=K^p,$$
and a fortiori $c_0\in K^p$. Hence all $c_i\in K^p$, against the
fact that $g(Z)\notin K^p[Z]$.
\end{proof}

\noindent{\bf Acknowledgments.} We are grateful to D. Stanley for
fruitful conversations, R. Guralnick and A. Zalesski for useful
references, and the referee for valuable comments.

%\bibliographystyle{amsplain}
%==================================================

\end{document}